\documentclass[11pt]{article}

\title{Strong solutions to singular SDEs and application to the Lennard-Jones potential.
}
\author{  D. Morale, G. Rui, S. Ugolini\\ \medskip Department of Mathematics, University of Milano, Italy. } 

\usepackage{cancel}
\usepackage{makeidx}         
\usepackage{graphicx}        
\usepackage{multicol}        
\usepackage[bottom]{footmisc}
\usepackage{newtxtext}       %
\usepackage{amsthm}
\usepackage[varvw]{newtxmath}       
\usepackage{todonotes}
\usepackage[font=normalsize,labelfont=bf]{caption}
\usepackage{geometry} 
\usepackage{algpseudocode}
\usepackage{url}

\definecolor{darkgreen}{rgb}{0,0.7,0}
\definecolor{darkred}{rgb}{0.5,0,0}
\definecolor{ultramarine}{rgb}{0.07, 0.04, 0.56}

 \usepackage{enumitem} 
 \usepackage{chemformula,comment}
 \usepackage[
    backend=biber,
    style=numeric,
    sorting=nyt,
    maxbibnames=10,
  ]{biblatex}
\addbibresource{Bibliography.bib}

\newtheorem{theorem}{Theorem}[section]
\newtheorem{proposition}[theorem]{Proposition} 

\newtheorem{lemma}{Lemma}[section]

\theoremstyle{definition}
\newtheorem{definition}[theorem]{Definition}

\newtheorem{example}{Example}[section]
 
\theoremstyle{remark}       
\newtheorem{remark}{Remark}[section]


\theoremstyle{definition} 
\newtheorem*{assumptionH}{Assumption H}
\newtheorem*{assumptionR}{Assumption R}

\begin{document}

\maketitle
\abstract{
We prove existence and  uniqueness of strong solutions to a large class of autonomous stochastic differential equations on an open domain, where the drift exhibits a singular behaviour at the boundary. The main result involves a drift composed of the gradient of a singular potential and an additional possibly singular force. In order to achieve the well-posedness of the model, we employ a probabilistic regularization approach.  Under suitable conditions, it is shown that  the explosion time of the solution process is infinite. The result is finally applied to the case of an interacting particle system subject to a Lennard-Jones potential, which is singular at the origin. 
}
\parindent=0pt

\medskip

{\bf Keywords} : {SDEs, singular drift,  strong solutions, regularization approach, Lennard-Jones potential}

\smallskip
{\bf MSC: }60H10, 60H30,  60K35, 60J60, 60J70, 60K40,  82C22, 82C31\smallskip

\section{Introduction}

The main focus of this paper is the  investigation of the well-posedness of strong solutions to Brownian motion driven stochastic differential equations (SDEs) defined in an open set  $U\subset\mathbb R^d$, where the drift exhibits a singularity at the boundary $\partial U$. More precisely, we consider autonomous SDEs,  defined on a filtered probability space $\left(\Omega,\mathcal{F},(\mathcal{F}_{t})_{t\in [0,T]},\mathbb{P}\right)$, with $T>0$ a fixed time horizon, which,  for any $t\in [0,T]$, reads as
$$
dX_t = \left[-\nabla \Phi(X_t)+ \mu(X_t)\right] dt + \sigma(X_t) dW_t,
$$
 where $\Phi:\mathbb{R}^d\to \mathbb{R}$ is a potential diverging at $\partial U$,  $\mu:\mathbb{R}^d\to \mathbb{R}^d$ is a Borel measurable function possibly singular as well in $\partial U$, $\sigma: \mathbb{R}^d \to \mathbb{R}^{d\times d}$ is a regular diffusion coefficient and
 $W=\{W_t\}_{t\in[0,T]}$ is a $d$-dimensional Brownian motion.
The main result establishes the existence and pathwise uniqueness of a strong solution to the previous SDE till its explosion time, which never occurs at finite time.  In order to prove such well-posedness result, we propose a regularization approach, as in \cite{2015_Godinho_quininao,2016_Liu_Yang},  and  find proper sufficient conditions on the potential $\Phi$ which guarantee the almost sure existence, by means of a specific martingale problem for the regularized equation. In particular, by proper regularity assumptions  upon the possible random  initial conditions, we establish that,  with probability one, the border $\partial U$ is reached at infinity time.  
The choice of the form of the drift is motivated by physical and applicative considerations. Furthermore the literature regarding the study of singular SDEs mainly deals with the case of gradient type drift (see, e.g., \cite{2003_albeverio_kondratiev_Rockner,2018_Dai_Pra,2005_krylov_rockner}).

\smallskip

 As a key application, in the second part of the paper we face a system of stochastic particles subject to Brownian noise and  pairwise interactions governed by a Lennard-Jones type potential  $V$, which is highly singular at the origin.  Indeed, the potential  $V$ has the following expression
\begin{eqnarray*}
    V(x) = \frac{A}{|x|^{\alpha}} - \frac{B}{|x|^{\beta}}, \qquad \alpha>\beta\geq 0.
\end{eqnarray*}  
The potential above models both attraction and repulsion  between two  particles: the first term represents a short-range strong repulsion to prevent particle overlap, while the second term represents a weaker, attractive force at intermediate range, able to model the Van der Waals forces. In the literature, in dimension $d=3$, the classical Lennard-Jones potential has exponents $\alpha=12, \beta =6$ and it is referred to as the $12-6$ law  \cite{Wales_2024}.  

\smallskip

Let $N\in \mathbb N$, and consider an $N$-dimensional stochastic process $\{(X^1_N,...,X^N_t)\}_{t\in[0,T]}$, where 
 $X_t^i \in \mathbb{R}^d$ denotes the position of the $i$-th particle out of $N$ at time $t\in [0,T]$.  The evolution of the particle system is described by the following first-order system of SDEs, for any $ t\in [0,T]$ and for any $i=1,\ldots,N$
\begin{equation*}
\begin{split}
         dX_t^i &= -\frac{1}{N}\mathop{\sum_{j=1}^N}_{j\neq i} \nabla V \left(X_t^i-X_t^j\right) dt + \sigma dW_t^i,
\end{split}
\end{equation*}
 where $\sigma\in \mathbb R_+$ and $\{(W_{t}^{1},\ldots, W_{t}^{N})_{t\in[0,T]}\}$ is  a  family of $N$-dimensional $\mathbb R^d$-valued Brownian motion,  defined on the above filtered probability space. The overall force exerted on the $i$-th  particle is an empirical mean of the force $F(r)= -\nabla V(r)$ due to any other particle located at a distance $r$ from it.

\smallskip
Our interest in such kind of interactions is due to physical applications, such as recent specific applications in Cultural Heritage \cite{2018_Dai_Pra,2025_Mach2023_MRU,2024_MTU}. The classical Lennard-Jones force is anyway widely used in modelling, especially  in molecular dynamics  \cite{2020_Flandoli_Leocata_Ricci,2014_Lin_Eisenberg,2016_MZCJ}. However, at least to the knowledge of the authors, apart from a recent paper \cite{2025_GM}, there are no rigorous analytical results when the cited potential is used in stochastic models.

 \medskip

\emph{Related literature.}
The study of well-posedness problem for SDEs with singular drifts has garnered significant attention in recent years due to their applications in various research areas  such as physics, finance, and biology. Extensive research has been conducted on one-dimensional SDEs with singular drift terms. Notable works include comprehensive classifications of singularities and their impact on the existence and uniqueness of solutions in one-dimensional systems  \cite{2005_cherny}. The extension of these results to higher-dimensional SDEs introduces additional complexities. The literature on higher-dimensional cases often imposes stringent regularity conditions on the singularity, such as the integrability requirement that the drift coefficient belongs to $L^p_{loc}$ for sufficiently large $p$. This ensures that the singularity is not too severe and allows for the application of certain analytical techniques \cite{2001_Gyongy_martinez,2021_Kinzebulatov_Semenov,1985_Takanobu,2023_Yang_Zhang,2005_Zhang}.   We also refer to \cite{2005_krylov_rockner} and \cite{2003_albeverio_kondratiev_Rockner} in which the authors prove  existence and uniqueness of strong solutions to stochastic equations in open domains  with unit diffusion and singular time-dependent drift up to the explosion time, the first giving integrability conditions under which the explosion time is infinite and the distributions of the solution have sub-Gaussian tails and, for the second, establishing strong Feller properties for a class of distorted Brownian motions. With respect to these papers we consider a more general drift, even though it is time independent, allowing for drifts not in gradient form and also with random initial conditions.
In the last decade, there has been notable progress in studying SDEs with specific types of singular drifts, including Poisson and Coulomb interactions \cite{2015_Godinho_quininao,2016_Liu_Yang,2019_Liu_Yang}. The interested reader may also refer to \cite{2020_Nam,2014_Beck_Flandoli_Gubinelli_Maurelli} for different integrability conditions.

\smallskip

At the best of our knowledge, there is no comprehensive study addressing SDEs with the not smoothed Lennard-Jones potential. Recently, a study via a regularization approach at a mesoscale is considered in \cite{2025_GM}; on one hand, the authors  deal with a McKean-Vlasov SDE with the drift given by a functional of the Lennard-Jones potential only and study the right integrability properties of the kernel in order to  establish the well-posedness; on the other hand, they consider an associated interacting particle system regularizing the drift where the particles interact via the force at a mesoscale of an appropriate order. The approach we propose here is different: indeed, we consider a more probabilistic method and  our first well-posedness result is quite more general in the following sense. We study a general SDE with an additive drift which includes a gradient term, depending on a potential which is singular at the origin, and with a second term that may be general with a possible singularity of order less than the order of the first one. Given regularity conditions upon the infinitesimal generator of the stochastic process, we prove that there exists a unique global strong
solution since there is no blow-up in a finite time. We address a proper rescaling procedure as in  \cite{2016_Liu_Yang}. With respect to the latter paper in which only a specific repulsive interaction is considered,  we allow a much stronger singularity and we include both a repulsive and attractive term. The application to the Lennard-Jones case is straightforward. We prove that it allows the correct control on the infinitesimal generator, with a right balance between the repulsive and the aggregation forces.
\medskip

The paper is organized as follows. In Section 2 the main result of the paper, namely a well-posedness result for autonomous Brownian motion driven SDEs with singular drift, is proposed  in a general setting. Section 3 is devoted  to the specific, but relevant, case in which the SDE describes an interacting particle system where the interaction force is derived from the Lennard-Jones potential.

\medskip
 
\section{Well-posedness of an SDE with an additive singular drift}

Let  $\left(\Omega,\mathcal{F},\mathbb F=(\mathcal{F}_{t})_{t\in [0,T]},\mathbb{P}\right)$ be a filtered probability space and
let $T>0$ be a fixed time horizon. We denote by  $U\subset\mathbb R^d$ an open set  with boundary $\partial U$.

\smallskip

The main result of the section regards the well-posedness of a class of SDEs with a drift containing an additive singular term of gradient type. Successively, we briefly sketched the case of a more general drift not necessarily in gradient form.

\subsection{The case of  additive singular  drift }

We consider a class  of  SDEs  on  $U$, whose drift consists of two additive Borel-measurable components: a first term of gradient type derived by a singular potential  $\Phi:\mathbb{R}^d\to \mathbb{R}$, and a second  possibly irregular perturbation $\mu:\mathbb{R}^d\to \mathbb{R}^d$, which is not necessarily the gradient of a potential.

More precisely, we focus on a stochastic process $X= \{X_t\}_{t\in [0,T]}$, which is the solution to the following general autonomous SDE in $\mathbb R^d$:
\begin{equation}\label{eq:SDE_gradientform}
dX_t = -\nabla \Phi(X_t)dt + \mu(X_t)dt + \sigma(X_t) dW_t,
\end{equation}
where $W=\{W_t\}_{t\in[0,T]}$ is an $\mathbb F$-adapted $d$-dimensional Brownian motion and $\sigma: \mathbb{R}^d \to \mathbb R^{d\times d}$ denotes the diffusion coefficient.

\medskip

We now state a set of conditions on the drift term and, in particular, on the potential $\Phi$ in \eqref{eq:SDE_gradientform}, useful to ensure the regularity of the solution and under which the well-posedness of \eqref{eq:SDE_gradientform} can be established. The interested reader may also refer to \cite{Khasminskii_2012, 1981_Veretennikov} for the general theory and \cite{2005_krylov_rockner} for the time-dependent framework and for specific applications.  

\begin{assumptionH}
Let $\Phi: U \to \mathbb{R}$ and $\mu: U \to \mathbb{R}^d$ be measurable functions with $\Phi \in \mathcal{C}^2(U)$ and $\mu$ locally Lipschitz on $U$. Assume that the following conditions hold:
\begin{itemize}
    \item [$H_1$.]\textit{Boundedness from below:}  there exists a constant $\delta \in \mathbb R_+$ such that $\Phi(x) \geq -\delta,$ for any $x\in U$. 
    \item[$H_2$.] \textit{Blow up at the boundary:} the potential presents a singularity at the boundary $\partial U$, i.e.
    \[
       \lim\limits_{d(x,\partial U)\to 0} \Phi(x)  = + \infty
    \]
    \item[$H_3$.] \textit{Regularity condition:} there exists a positive constant \( \eta \) such that, for every \( x \in U \), the following inequality holds:
    \begin{equation}\label{Hp:H3}
        - \left|\nabla\Phi(x)\right|^2 + \frac{1}{2} \operatorname{Tr}\left[ \sigma(x)^\top \nabla^2 \Phi(x)\, \sigma(x) \right] + \mu(x) \cdot \nabla\Phi(x)\leq \eta <\infty.
    \end{equation}
    \item [$H_4.$] \textit{Diffusion regularity: } the diffusion coefficient $\sigma$ satisfies standard conditions of linear growth and Lipschitzianity, and boundedness. 
    \end{itemize}
\end{assumptionH}

\smallskip

In order to prove the global existence and uniqueness of a strong solution of equation \eqref{eq:SDE_gradientform} with a drift satisfying Assumption $H$, we employ a regularization-and-limit strategy, by showing that the particle paths almost surely avoid the boundary of the domain $U$. Our approach generalizes to a broader class of singular drift terms the result in \cite{2016_Liu_Yang}, extending it beyond the repulsive Coulomb interaction case.
\smallskip

Let us first introduce the regularized version of equation \eqref{eq:SDE_gradientform}. 
\begin{assumptionR}[Regular approximation of the drift] 
Let $\Phi$ and $\mu$ satisfy Assumption \emph{H}.  For any $\epsilon > 0$, we define the truncated domain
\[
U_\epsilon := \left\{x \in U : d(x, \partial U)> \, \epsilon \right\}, \qquad  U_\epsilon^c := U \setminus U_\epsilon.
\]
Let $\Phi^\epsilon \in C^2(\mathbb{R}^d, \mathbb{R})$ and $\mu^\epsilon \in C(\mathbb{R}^d, \mathbb{R}^d)$ be smooth approximations of $\Phi$ and $\mu$, respectively, satisfying the following properties:
    \begin{itemize}
    \item[$R_1.$] For all $x \in U_\epsilon$, we have $\Phi^\epsilon(x) = \Phi(x)$ and $\mu^\epsilon(x) = \mu(x)$;
    
    \item[$R_2.$] For all $x \in U_\epsilon^c$, $\left|\Phi^\epsilon(x)\right| \leq C \left|\Phi(x)\right|$ for some constant $C > 0$;
    
    \item[$R_3.$] Both $\nabla \Phi^\epsilon$ and $\mu^\epsilon$  are   globally Lipschitz continuous with at most  linear growth.
\end{itemize}
\end{assumptionR}

\smallskip

\begin{definition}[Regularized process]
  Let $\Phi, \mu $ be as in Assumption H, and let $\Phi_\epsilon, \mu_\epsilon$ be the corresponding regularized versions as in Assumption R.  Define the regularized process $X^\epsilon=\{X_t^\epsilon\}$ as the solution to the following SDE, for any $t\in [0,T]$
\begin{eqnarray}\label{eq:SDE_gradientform_regularized}
    dX_t^\epsilon = -\nabla\Phi^\epsilon(X_t^\epsilon)dt + \mu^\epsilon(X_t^\epsilon)dt + \sigma(X_t^\epsilon) dW_t,
\end{eqnarray} with $X_0^\epsilon \in U$.
\end{definition}

\smallskip

\begin{proposition}\label{prop:existence_reg_system}
Let us suppose that Assumptions \emph{H} and \emph{R} are  satisfied. Then equation \eqref{eq:SDE_gradientform_regularized} admits a pathwise unique global strong solution. 
\end{proposition}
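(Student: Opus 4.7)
The plan is to reduce Proposition 2.1 to a direct application of the classical existence and uniqueness theorem for Itô SDEs with globally Lipschitz, linearly growing coefficients (as in, e.g., Karatzas--Shreve or Protter). First, I would combine the drift coefficients and set
\begin{equation*}
b^\epsilon(x) := -\nabla\Phi^\epsilon(x) + \mu^\epsilon(x).
\end{equation*}
By Assumption R$_3$, both $\nabla \Phi^\epsilon$ and $\mu^\epsilon$ are globally Lipschitz on $\mathbb{R}^d$ with at most linear growth, hence so is $b^\epsilon$. By Assumption H$_4$, the diffusion coefficient $\sigma$ is globally Lipschitz and bounded (in particular of linear growth). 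Thus the regularized SDE \eqref{eq:SDE_gradientform_regularized} has coefficients satisfying the standard hypotheses required for strong well-posedness on $[0,T]$.

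Next, I would invoke the classical theorem guaranteeing existence of a pathwise unique strong solution: one constructs the solution as the limit of Picard iterates
\begin{equation*}
X_t^{\epsilon,(n+1)} = X_0^\epsilon + \int_0^t b^\epsilon\bigl(X_s^{\epsilon,(n)}\bigr)\,ds + \int_0^t \sigma\bigl(X_s^{\epsilon,(n)}\bigr)\,dW_s,
\end{equation*}
and uses the global Lipschitz property together with the Itô isometry and Gronwall's inequality to show the iterates converge in $L^2(\Omega; C([0,T];\mathbb{R}^d))$ to an $\mathbb{F}$-adapted continuous process solving the SDE. The same Lipschitz bound applied to the difference of two solutions, followed by Gronwall, yields pathwise uniqueness.

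For globality, I would note that the linear growth of $b^\epsilon$ and $\sigma$ precludes explosion in finite time: a standard application of Itô's formula to $|X_t^\epsilon|^2$ combined with Gronwall produces an a priori bound
\begin{equation*}
\mathbb{E}\!\left[\sup_{t\in[0,T]}|X_t^\epsilon|^2\right] \leq C_T\bigl(1+\mathbb{E}|X_0^\epsilon|^2\bigr),
\end{equation*}
so the explosion time of $X^\epsilon$ is $\mathbb{P}$-a.s.\ equal to $+\infty$. Since every step here is essentially bookkeeping once Assumption R$_3$ and H$_4$ are in place, I do not anticipate a substantial obstacle in the proof of this proposition; the genuinely delicate part of the theory — controlling the behaviour as $\epsilon\to 0$ and showing the limit process avoids $\partial U$ — lies entirely in the subsequent passage to the limit, where Assumption H$_3$ on the generator will be crucial. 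The role of Proposition 2.1 is simply to provide well-defined $\epsilon$-level approximants with which to carry out that limiting argument.
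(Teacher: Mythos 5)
Your proposal is correct and follows essentially the same route as the paper, which simply invokes the standard existence and uniqueness theory for SDEs with globally Lipschitz, linearly growing coefficients guaranteed by Assumptions $R_3$ and $H_4$. Your write-up merely spells out the Picard iteration, Gronwall, and non-explosion details that the paper leaves implicit.
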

\begin{proof}
Follows from standard results on SDEs with globally Lipschitz and linearly growing coefficients thanks to Assumption $R_3.$  
\end{proof}

To control collisions with the boundary, we now introduce a stopping time corresponding to the first exit time from $U^\epsilon$
\begin{equation}\label{def:collisiontime_border_regularized}
    \tau_\epsilon := \inf\left\{t\in [0,2T]: X_t^\epsilon \notin  U_\epsilon\right\}.
\end{equation}
\smallskip

The following is our main result.

\begin{theorem}[Existence and uniqueness under singular drifts]\label{thm:mainthm_existence_uniqueness}
    Let Assumptions \textit{H} and \emph{R} be satisfied. Let $W_t$ be a $d-$dimensional Wiener process on a filtered probability space $\left(\Omega, \mathcal{F}, \mathcal{F}_t, \mathbb{P}\right)$. Suppose the random initial condition $X_0$ satisfies
\begin{equation}
\mathbb{E}\left[|X_0|^2\right] < \infty, \qquad 
\mathbb{E} \big[\Phi(X_0)\big] < \infty.
  \label{eq:mainthm_existence_uniqueness_initial_conditions}
\end{equation}
Then, the singular equation \eqref{eq:SDE_gradientform}  admits a pathwise unique strong solution on any finite time interval $[0,T]$. In particular, the singularity on  $\partial U$ is almost surely never reached in finite times.
\end{theorem}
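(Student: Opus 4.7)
The plan is to use $\Phi$ itself as a Lyapunov function controlling the first exit time $\tau_\epsilon$ of the regularized process $X^\epsilon$ from $U_\epsilon$, to prove that $\tau_\epsilon$ increases to $+\infty$ almost surely as $\epsilon \to 0$, and then to paste the regularized solutions together by consistency. The algebraic heart of the argument is that hypothesis $H_3$ is precisely the statement that the infinitesimal generator of \eqref{eq:SDE_gradientform_regularized} applied to $\Phi$ is bounded above by $\eta$ on $U_\epsilon$, where $\Phi^\epsilon$ and $\mu^\epsilon$ coincide with $\Phi$ and $\mu$.

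Concretely, I would apply Itô's formula to $\Phi^\epsilon(X^\epsilon_t)$ stopped at $\tau_{\epsilon,R} := \tau_\epsilon \wedge \inf\{t \ge 0 : |X^\epsilon_t| \ge R\}$. On $[0,\tau_{\epsilon,R}]$ the process lies in the relatively compact set $\overline{U_\epsilon}\cap\overline{B_R(0)}$, on which $\nabla\Phi$, $\mu$ and $\sigma$ are bounded, so the stochastic-integral term is a true martingale. Using $H_3$ and taking expectations gives
\begin{equation*}
\mathbb E\bigl[\Phi(X^\epsilon_{t\wedge \tau_{\epsilon,R}})\bigr] \;\le\; \mathbb E[\Phi(X_0)] + \eta\, t.
\end{equation*}
Non-explosion of $X^\epsilon$ from Proposition~\ref{prop:existence_reg_system}, together with Fatou's lemma and the lower bound $\Phi\ge -\delta$ from $H_1$, lets me pass to the limit $R\to\infty$ and recover the same inequality with $\tau_\epsilon$ in place of $\tau_{\epsilon,R}$. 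On the event $\{\tau_\epsilon \le T\}$ one has $d(X^\epsilon_{\tau_\epsilon},\partial U)=\epsilon$, so setting $\psi(\epsilon) := \inf_{d(x,\partial U)=\epsilon} \Phi(x)$, which diverges to $+\infty$ as $\epsilon\to 0$ by $H_2$, a Markov-type inequality yields
\begin{equation*}
\mathbb P(\tau_\epsilon \le T) \;\le\; \frac{\mathbb E[\Phi(X_0)] + \eta\, T + \delta}{\psi(\epsilon)} \;\longrightarrow\; 0.
\end{equation*}

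To complete existence I would observe that for $\epsilon' < \epsilon$ the solution $X^\epsilon$ also satisfies the $\epsilon'$-regularized SDE on $[0,\tau_\epsilon]$, since the coefficients agree on $U_\epsilon \subset U_{\epsilon'}$; pathwise uniqueness from Proposition~\ref{prop:existence_reg_system} then forces $X^\epsilon \equiv X^{\epsilon'}$ on $[0,\tau_\epsilon]$. Hence $\tau_\epsilon$ is monotone non-decreasing as $\epsilon\to 0$, the limit $\tau:=\lim_{\epsilon\to 0}\tau_\epsilon$ is a stopping time, and $X_t := X^\epsilon_t$ on $\{t<\tau_\epsilon\}$ unambiguously defines an adapted strong solution of \eqref{eq:SDE_gradientform} on $[0,\tau)$. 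The probability estimate above gives $\mathbb P(\tau\le T)=0$ for every $T$, which is simultaneously global existence and the non-attainment of $\partial U$. Pathwise uniqueness for \eqref{eq:SDE_gradientform} itself follows by the same localization: on $U_\epsilon$ the coefficients $-\nabla\Phi+\mu$ and $\sigma$ are locally Lipschitz, so any two strong solutions coincide up to the minimum of their $\epsilon$-exit times, and the Itô-Lyapunov estimate applied directly to a generic solution shows that this minimum itself tends to infinity.

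The main obstacle I anticipate is making the Itô step rigorous: $\nabla\Phi$ is generically unbounded on $U_\epsilon$ when $U$ itself is unbounded (as in the Lennard-Jones application $U=\mathbb R^d\setminus\{0\}$), so a priori the stochastic integral is only a local martingale and the Lyapunov identity only formal. The double localization by $\tau_\epsilon\wedge \tau_R$, combined with the linear-growth / boundedness hypotheses on $\sigma$ from $H_4$ and the second-moment condition on $X_0$ in \eqref{eq:mainthm_existence_uniqueness_initial_conditions} (which, via Grönwall, yield an $L^2$-bound on $X^\epsilon$ uniform in $R$), is what ultimately permits sending $R\to\infty$ first and $\epsilon\to 0$ afterwards while keeping the estimate above meaningful.
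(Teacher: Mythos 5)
Your proposal is correct and shares the paper's overall architecture (regularize, control the exit time $\tau_\epsilon$ from $U_\epsilon$, let $\epsilon \to 0$, and paste the regularized solutions together by pathwise uniqueness), but the central probabilistic estimate is genuinely different. The paper keeps the It\^o decomposition pathwise: it derives lower bounds on $\inf_{t\le T} M_{t\wedge\tau_\epsilon}$ and $\sup_{t\le T} M_{t\wedge\tau_\epsilon}$, splits on the event $\{\Phi(X_0)+\delta \ge R\}$, estimates the resulting two-barrier exit probability via Doob's optional sampling identity $\mathbb{P}(T_{b-a}\le T_{-a})=a/b$, and finally optimizes $R=\sqrt{f(\epsilon)}$ to obtain $\mathbb{P}(\tau_\epsilon\le T)\le C/\sqrt{f(\epsilon)}$. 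You instead take expectations immediately, turning $H_3$ into the Khasminskii-type Lyapunov bound $\mathbb{E}\bigl[\Phi(X^\epsilon_{t\wedge\tau_\epsilon})\bigr]\le \mathbb{E}[\Phi(X_0)]+\eta t$ and concluding by Chebyshev together with the lower bound $\Phi\ge-\delta$. Your route is shorter --- it needs neither the pathwise sup/inf bounds, nor the optional-sampling step, nor the auxiliary parameter $R$ --- and it yields the sharper rate $O(1/f(\epsilon))$; your double localization at $\tau_\epsilon\wedge\tau_R$ is also a cleaner justification of the martingale property than the paper's global Gr\"onwall $L^2$ bound in its Step 1, though both work. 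Two small points to tidy: on $\{\tau_\epsilon\le T\}$ the identity $d(X^\epsilon_{\tau_\epsilon},\partial U)=\epsilon$ presumes $X_0\in U_\epsilon$, so for the Chebyshev step one should either restrict to that event or replace $\psi(\epsilon)$ by the infimum of $\Phi$ over the closed shell $\{d(x,\partial U)\le\epsilon\}$ (the paper glosses over the same point); and in the uniqueness step it is worth stating explicitly that any strong solution of the singular equation solves the $\epsilon$-regularized equation up to its own $U_\epsilon$-exit time and hence coincides with $X^\epsilon$ there, which is how the paper closes the argument as well.
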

\begin{proof}
 The fundamental point is to show that the solution to equation \eqref{eq:SDE_gradientform} never reaches the singularity, that is, by introducing the explosion time
 \begin{equation*}
    \tau:= \inf\left\{t >0: X_t\in \partial U  \right\}
\end{equation*}
to prove that, for any $T>0$, then  
 $ \mathbb P(\tau<T) = 0,$ or equivalently $ P(\tau=\infty) = 1$, i.e. that the explosion time of the solution occurs at infinity. 

 In order to establish this fact, let $X_t^\epsilon$ be the unique strong solution of the regularized SDE \eqref{eq:SDE_gradientform_regularized}, which exists by Proposition \ref{prop:existence_reg_system} and Assumptions $R_1$ and $R_3$. Given the definition \eqref{def:collisiontime_border_regularized} of $\tau_\epsilon$ and the properties of the regularizations, we have that $X^\epsilon_t$ and $X_t$ are solutions to the same equation in $[0,\tau_\epsilon]$, so that  $X_t=X^\epsilon_t$ in $[0,\tau_\epsilon]$. 
 The family  $\{\tau_\epsilon\}_\epsilon$, for $\epsilon>0$,  is a non-decreasing family of stopping times such that $0\leq \tau_{\epsilon}< \tau $ and $\tau_\epsilon \uparrow \tau$, as $\epsilon \downarrow 0^+$. Then, to establish global existence and uniqueness of equation \eqref{eq:SDE_gradientform}, it suffices to show that, for any fixed finite $T>0$
\begin{equation}\label{eq:prob_stopping_time-epsilon}
\lim_{\epsilon \to 0} \mathbb{P}(\tau_\epsilon \le T) = 0.
\end{equation}

 \smallskip

Therefore, now it is sufficient to show  that   for any fixed finite $T$ the limit \eqref{eq:prob_stopping_time-epsilon} holds true  \cite{2015_Godinho_quininao,2016_Liu_Yang,1985_Takanobu}. This is done in Steps 1-5 below.

\smallskip

\emph{Step 1. Martingale decomposition.}
To simplify the notations, for any function $g$ on $\mathbb R^d$, we set $g^\epsilon_t:= g^\epsilon(X^\epsilon_t)$.
By applying It\^o formula to $\Phi^\epsilon_{t\wedge\tau_\epsilon}$ on $[0,t\wedge\tau_\epsilon]$, we obtain
\begin{eqnarray*}
\Phi^\epsilon_{t\wedge\tau_\epsilon}
    & = &  \Phi^\epsilon_0 - \int_0^{t\wedge\tau_\epsilon}\left(|\nabla\Phi_s^\epsilon|^2ds -\nabla\Phi_s^\epsilon\cdot\mu_s^\epsilon - \frac{1}{2} \operatorname{Tr}\left[ \sigma_s^\top \nabla^2 \Phi^\epsilon_s\, \sigma_s \right] \right)ds + M_{t\wedge\tau_\epsilon},
\end{eqnarray*}
where $M_{t}$ is the local martingale
\begin{equation*}
    M_{t} = \int_0^{t} \nabla\Phi_s^\epsilon \cdot \sigma_s dW_s.
\end{equation*}
The process $ M_{t} $ is actually an $\mathcal{F}_t$-square-integrable martingale. Indeed, under Assumption $H_4$ the diffusion coefficient is bounded, and under Assumption $R_3$ the regularized force $\nabla\Phi^\epsilon$ has at most linear growth. Therefore,
\begin{eqnarray}\label{eq:gradient_estimate}
    \int_0^T \mathbb{E}\left[|\nabla\Phi_s^\epsilon\cdot \sigma_s|^2\right] ds &
    \leq c_\epsilon\left( T + \int_0^T \mathbb{E}\left[|X_s^\epsilon|^2\right] ds\right).
\end{eqnarray}
and by \eqref{eq:SDE_gradientform}
\begin{eqnarray*}
    \mathbb{E}\left[|X_t^\epsilon|^2\right] &=&
    \mathbb{E}\left[\left|X_0 - \int_0^t \nabla\Phi_s^\epsilon\, ds
    + \int_0^t \mu_s^\epsilon ds
    + \int_0^t \sigma_s W_s\right|^2 \right] \leq \\
    &\leq& 4\mathbb{E}\left[|X_0|^2\right]
    + 4\mathbb{E}\left[\left|\int_0^t  \nabla\Phi_s^\epsilon ds \right|^2 \right] 
    +4\mathbb{E}\left[\left|\int_0^t \mu_s^\epsilon ds \right|^2 \right] 
    + 4 C \mathbb{E}\left[|W_t|^2\right].
\end{eqnarray*}
By H\"older inequality we can do the following estimate
\begin{eqnarray*}
    \mathbb{E}\left[\left|\int_0^t  \nabla\Phi_s^\epsilon ds \right|^2 \right] & \leq & 
    t\mathbb{E}\left[\int_0^t \left| \nabla\Phi_s^\epsilon \right|^2\, ds\right] \leq c_{\epsilon,T}\left(1 +  \int_0^t \mathbb{E}[|X_s^\epsilon|^2]\, ds\right).
\end{eqnarray*}
and similarly for the $\mu^\epsilon$ term by linear growth.
Hence, by combining all terms, we obtain the following inequality:
\[
    \mathbb{E}\left[\left|X_t^\epsilon\right|^2\right]\leq C_{\epsilon,T}+ C_{\epsilon,T}\int_0^t \mathbb{E}\left[\left|X_s^\epsilon\right|^2\right] ds,
\]
which, by Gr\"onwall lemma, gives the following  bound, for all $t\in [0,T]$,
\begin{equation}\label{eq:exp_X^2_epsilon}
    \mathbb{E}\left[|X_t^\epsilon|^2\right] \leq C_{\epsilon,T,d}.
\end{equation}
By applying Ito isometry to $M_{t}$ and by \eqref{eq:gradient_estimate} and \eqref{eq:exp_X^2_epsilon}, we obtain that $M_{t}$ is indeed a square-integrable martingale. This completes Step 1.

\smallskip

\emph{Step 2. Bounds on the martingale.}
We derive lower bounds on the infimum and supremum of the martingale $M_{t\wedge \tau_\epsilon}$. Specifically, we establish that the following pathwise estimates hold true
\begin{eqnarray}
    \inf_{t\in[0,T]} M_{t\wedge\tau_\epsilon} &\geq& - \Phi_0 - \delta - \eta T, \label{eq:EstimateMinf}\\
    \underset{t\in[0,T]}{\sup}M_{t\wedge \tau_\epsilon} &\geq&  \underset{t\in[0,T]}{\sup}\Phi_t^\epsilon - \Phi_0 - \delta - \eta T, \label{eq:EstimateMsup}
\end{eqnarray}
where $\eta$ is the constant from the regularity condition $H_3$, that does not depend on $\epsilon$ and on the regularization, and $\delta$ is as in Assumption $H_1$. 

From the decomposition in Step 1, we can write:
\begin{eqnarray*}
M_{t\wedge\tau_\epsilon} &= & \Phi^\epsilon_{t\wedge\tau_\epsilon}
    - \Phi^\epsilon_0 + \int_0^{t\wedge\tau_\epsilon}\left(|\nabla\Phi_s^\epsilon|^2  - \nabla\Phi_s^\epsilon\cdot\mu_s^\epsilon - \frac{1}{2} \operatorname{Tr}\left[ \sigma_s^\top \nabla^2 \Phi^\epsilon_s\, \sigma_s \right]\right) ds,
\end{eqnarray*}
which gives by \eqref{Hp:H3} in Assumption $H_3$, that
\[
    M_{t\wedge\tau_\epsilon}\geq \Phi^\epsilon_{t\wedge\tau_\epsilon}
    - \Phi_0 - \eta T  .
\]
Finally, by Assumptions $H_1$ and $R_3$, inequalities \eqref{eq:EstimateMinf} and \eqref{eq:EstimateMsup} are easily obtained.

\smallskip

\emph{Step 3. Collision probability control.}
We are going to estimate the probability that a collision occurs before any fixed time $T$, namely the quantity $\mathbb{P}(\tau_\epsilon \leq T)$.

If $\tau_\epsilon\leq T$, then it holds that $\underset{t\in[0,T]}{\sup}\Phi_t^\epsilon \geq \Phi_{\tau_\epsilon}^\epsilon$.  Using the estimates obtained in Step 2 (\eqref{eq:EstimateMinf} and \eqref{eq:EstimateMsup}), we deduce:
\begin{eqnarray*}
    \left\{\tau_\epsilon\leq T\right\} &\subseteq& \left\{\underset{t\in[0,T]}{\sup}\Phi_t^\epsilon \geq \Phi_{\tau_\epsilon}^\epsilon\right\} \\
    & {\subseteq}& \left\{
    \underset{t\in[0,T]}{\sup}M_{t\wedge \tau_\epsilon} \geq\Phi_{\tau_\epsilon}^\epsilon - \Phi_0 - \delta - \eta T\right\} \subseteq \\
    & \subseteq&\left\{\underset{t\in[0,T]}{\sup}M_{t\wedge \tau_\epsilon} \geq\Phi_{\tau_\epsilon}^\epsilon - \Phi_0 - \delta - \eta T, \underset{t\in[0,T]}{\inf}M_{t\wedge \tau_\epsilon} \geq - \Phi_0 - \delta - \eta T\right\}.
\end{eqnarray*}

For $\epsilon >0$, define  $ f(\epsilon) := \inf_x\{\Phi(x)| \,\,  d(x,\partial U) = \epsilon\},$  so that by Assumption $H_2$ we know that
\begin{equation}\label{eq:f(epsilon)_to_zero}
    \lim_{\epsilon \to 0} f(\epsilon) = +\infty.
\end{equation}
Since $\tau_\epsilon$ is the first time the process exits the region $U_\epsilon$, we must have $\Phi^\epsilon_{\tau_\epsilon}\geq f(\epsilon)$. Hence, we obtain
\begin{eqnarray*}
    \left\{\tau_\epsilon\leq T\right\} &\subseteq& \left\{\underset{t\in[0,T]}{\sup}M_{t\wedge \tau_\epsilon} \geq f(\epsilon) - \Phi_0 -\delta - \eta T, \underset{t\in[0,T]}{\inf}M_{t\wedge \tau_\epsilon} \geq - \Phi_0 -\delta - \eta T \right\}\\
    &:=& A \cap B.
\end{eqnarray*}
Now, fix an arbitrary constant $R>0$ (to be chosen later) and decompose the probability in the following standard way:
\begin{eqnarray*}
    && \mathbb{P}(\tau_\epsilon\leq T) \, \,\, \leq \,\, \mathbb{P}\left( A\cap B\cap \left(\{\Phi_0+\delta <  {+}R \}\cup \{\Phi_0+\delta \geq R \}\right) \right) \leq\\
    && \qquad \qquad \quad \leq \,\, 
    \mathbb{P}\left( A\cap B\}\right) 
    + \mathbb{P}\left(\Phi_0+\delta \geq R \right) \leq \\
    &\leq& \mathbb{P}\left(\underset{t\in[0,T]}{\sup}M_{t\wedge \tau_\epsilon} > f(\epsilon) -R -\eta T, \underset{t\in [0,T]}{\inf} M_{t\wedge\tau_\epsilon} > -R -\eta T\right) +\mathbb{P}\left(\Phi_0+\delta \geq R\right).
\end{eqnarray*}
By Markov inequality and assumption \eqref{eq:mainthm_existence_uniqueness_initial_conditions}, from one hand we have
\begin{equation*}
    \mathbb{P}\left(\Phi_0+\delta \geq R\right)\leq \frac{\mathbb{E}[\Phi_0+\delta]}{R} \leq \frac{C}{R},
\end{equation*}
and, on the other hand, by introducing the stopping times
\[
 T_a := \inf\{t\geq 0: M_{t\wedge\tau_\epsilon} = a\},
\]
then, for small enough $\epsilon$ such that $f(\epsilon)>R$, we obtain
\begin{equation*}
    \mathbb{P}\left(\underset{t\in[0,T]}{\sup}M_{t\wedge \tau_\epsilon} > f(\epsilon) -R -\eta T, \underset{t\in [0,T]}{\inf} M_{t\wedge\tau_\epsilon} > -R -\eta T\right) \leq
    \mathbb{P}\left(T_{f(\epsilon)-R-\eta T} \leq T_{-R-\eta T}\right).
\end{equation*}

 By  Doob's optional sampling theorem for zero mean martingales (see, e.g., \cite{2021_Capasso_Bakstein}), we get:
\begin{equation*}
    \mathbb{P}\left(T_{b-a}\leq T_{-a}\right) = \frac{a}{b}.
\end{equation*}
Indeed, introducing $\tau := T_{b-a} \wedge T_{-a}$, the above equality derives from the following simple computations
\begin{eqnarray*}
    0 &=& \mathbb{E}\left[M_\tau\right] =  \mathbb{E}\left[M_\tau 1_{T_{b-a}\leq T_{-a}}\right] + \mathbb{E}\left[M_\tau 1_{T_{b-a}> T_{-a}}\right]\\
    &=& (b-a)\mathbb{P}(T_{b-a}\leq T_{-a}) -a(1- \mathbb{P}(T_{b-a}\leq T_{-a})).
\end{eqnarray*}
Now, by considering $a= R+\eta T, b=f(\epsilon)$ and putting everything together, we can obtain that for any $R>0$
$$
    \mathbb{P}\left(\tau_\epsilon\leq T\right) \leq \frac{C}{R} + \frac{R+\eta T}{f(\epsilon)}.
$$
Now, by taking $R = \sqrt{f(\epsilon)}$,  from \eqref{eq:f(epsilon)_to_zero} we have that $\mathbb{P}(\tau_\epsilon \leq T)\rightarrow 0,$ as $\epsilon \downarrow 0$. Since $\tau_\epsilon <\tau_{\epsilon^\prime}$ if $\epsilon < \epsilon^\prime$, then
\begin{equation}\label{eq:a.s.noncollision}
\mathbb{P} \left( \lim_{\epsilon \rightarrow 0} \tau_\epsilon > T \right) = \lim_{\epsilon \rightarrow 0} \mathbb{P}(\tau_\epsilon > T) = 1. \end{equation}
We have proven that almost surely $X^\epsilon$ does not leave $U^\epsilon$ in a finite time as $\epsilon$ decreases.

\smallskip

\emph{Step 4. Existence and uniqueness of a global strong solution.} As mentioned at the beginning of the proof, now we aim at constructing  a global strong solution to equation \eqref{eq:SDE_gradientform} as the limit of the regularized processes \( \{X^\epsilon_t\}_{\epsilon>0}\), thus establishing its pathwise uniqueness. 

\smallskip

By  Proposition \ref{prop:existence_reg_system}, for any fixed $\epsilon>0$, the regularized equation \eqref{eq:SDE_gradientform_regularized} admits a pathwise unique strong solution $\left\{X_t^\epsilon\right\}_{t\in [0,T]}$.  On the other hand, equation \eqref{eq:a.s.noncollision} means that   there exists a $A\in  \mathcal{F}$, with $\mathbb P(A)=1$ such that for any   \( \omega \in A\), there exists a \( \epsilon_0(\omega) \) such that for all \( \epsilon \leq \epsilon_0(\omega) \), we have $\tau_\epsilon(\omega) \geq T$, which implies that for any $t\in [0,T]$, $X^\epsilon_t \in U^\epsilon$. 
Moreover, by Assumptions R,    $\Phi^\epsilon(X^\epsilon_t)=\Phi(X^\epsilon_t)$ and  $\mu^\epsilon(X^\epsilon_t)=\mu(X^\epsilon_t)$.    
  Therefore, for such $\epsilon$  and for $t\in [0,T]$, $X^\epsilon_t(\omega)$ satisfies
\begin{equation*}
X_t^\epsilon(\omega) = X_0^\epsilon(\omega) - \int_0^t \left( \nabla\Phi(X_t^\epsilon(\omega)) + \mu(X_t^\epsilon(\omega))\right) dt +  \int_0^t \sigma(X_t^\epsilon) dW_t(\omega).
\end{equation*}
For any $x\in U^\epsilon$, by Assumptions $R_2,R_3$, the solution uniqueness of the previous equation implies that for any $\omega\in A$  and $\epsilon \le \epsilon_0(\omega)$, for all $t\in [0,T]$,
\[
    X^\epsilon_t(\omega) \equiv X^{\epsilon_0}_t(\omega).
\] 
In conclusion, almost surely (i.e. for any  $\omega \in A$), there exists the limit of $X_t^\epsilon$ as $\epsilon \downarrow 0$, say that $X_t$ is a solution of \eqref{eq:SDE_gradientform}. Hence, the thesis is achieved.

Finally, since $T>0$ was arbitrary, we can extend the solution globally in time, yielding a global strong solution, which is unique in the pathwise sense.
\end{proof}
 
\begin{remark}
The assumptions on the diffusion coefficient $\sigma$ can be relaxed in various directions without affecting the overall structure of the proof. For instance, the linear growth condition can be replaced by a polynomial growth assumption, provided the initial data possesses higher integrability or regularity to ensure appropriate moment bounds for the solution.
Moreover, it is possible to allow $\sigma$ to exhibit singularities near the boundary of the domain $U$, as long as there exists a family of regularized coefficients $\sigma^\epsilon$ that are Lipschitz continuous, have linear growth, and are bounded, and such that $\sigma^\epsilon = \sigma$ on the inner subdomain $U_\epsilon$. Under these conditions, the regularization-and-limit strategy used in the proof applies without substantial modifications.
\end{remark}

\subsection{A general drift case}

The previous result can be extended to the case where the drift is not necessarily of gradient type, but still satisfies an appropriate Lyapunov-type condition.  
\begin{proposition}\label{prop:general_drift}
    Let $\mu:\mathbb R^d \to \mathbb R^d$ 
    be a vector field that becomes singular near the boundary of an open set $U\subseteq \mathbb R^d$, $\mu\in\mathcal{C}^2(U)$, and assume:
\begin{itemize}
\item[i)]  \emph{Blow-up near the boundary}: $|\mu(x)| \to \infty $ as $d(x,\partial U) \to 0$;
\item[ii)] \emph{Diffusion regularity}: the diffusion coefficient $\sigma(X_t)$ satisfies standard conditions of linear growth and Lipschitzianity, and boundedness;
\item[iii)]  \emph{Regularity condition}: there exists a constant $\eta>0$ such that for all $x\in U$
\begin{equation*}
\mu(x) \cdot \nabla \mu(x) + \frac{1}{2} \operatorname{Tr}\left[ \sigma(X_t)^\top \nabla^2 \mu(X_t)\, \sigma(X_t) \right] \leq \eta;
\end{equation*}
\item[iv)]  \emph{Regularization}: for every $\epsilon>0$ there exists a regularization $\mu_\epsilon \in \mathcal{C}^2(\mathbb{R}^d)$ such that:
\begin{enumerate}
\item $\mu_\epsilon(x) = \mu(x) $ for all $x\in U_\epsilon$;
\item $|\mu^\epsilon(x)|\leq C|\mu(x)|$ for all $x\in U$;
\item $\mu^\epsilon$ has at most linear growth and is globally Lipschitz.
\end{enumerate}
\end{itemize}

    Then, the SDE
    \begin{equation*}
        dX_t = \mu(X_t) dt + \sigma(X_t) dW_t
    \end{equation*}
    admits a unique, strong solution, provided that the random initial condition $X_0$ satisfies
        \[
\mathbb{E}\left[|X_0|^2\right] < \infty, \qquad 
\mathbb{E} \big[\left| \Phi(X_0) \right| \big] < \infty.
\]
\end{proposition}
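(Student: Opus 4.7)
\emph{Proof plan.} The strategy is to mimic the regularization-and-limit argument used in the proof of Theorem \ref{thm:mainthm_existence_uniqueness}, with the Lyapunov role previously played by the potential $\Phi$ now taken over by a scalar function $V: U \to \mathbb{R}$ built directly from the singular drift $\mu$: the natural candidate is $V(x) := |\mu(x)|$ (or $|\mu(x)|^2$), which, by condition i), diverges as $d(x,\partial U) \to 0$, thus reproducing the blow-up property $H_2$. Under this reading, the regularity condition iii) is to be interpreted as the upper bound on the infinitesimal generator $\mathcal{L}^\epsilon V := \mu^\epsilon \cdot \nabla V + \tfrac{1}{2}\operatorname{Tr}[\sigma^\top \nabla^2 V \, \sigma] \leq \eta$ of the regularized process, uniformly in $\epsilon$.

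First, I would set up the regularized SDE $dX^\epsilon_t = \mu^\epsilon(X^\epsilon_t)\,dt + \sigma(X^\epsilon_t)\,dW_t$ using the smooth approximation from condition iv), which admits a pathwise unique global strong solution by the classical Lipschitz--linear-growth theory, exactly as in Proposition \ref{prop:existence_reg_system}. I would then define the exit time $\tau_\epsilon := \inf\{t \in [0,2T]: X^\epsilon_t \notin U_\epsilon\}$ in analogy with \eqref{def:collisiontime_border_regularized}, and apply It\^o's formula to $V(X^\epsilon_{t \wedge \tau_\epsilon})$, isolating the local martingale $N_t := \int_0^t \nabla V(X^\epsilon_s)^\top \sigma(X^\epsilon_s)\,dW_s$ and using the condition iii) to bound the bounded-variation part above by $\eta T$. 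A Gr\"onwall argument on $\mathbb{E}[|X^\epsilon_t|^2]$, identical in spirit to Step 1 in the proof of Theorem \ref{thm:mainthm_existence_uniqueness}, promotes $N_t$ to an honest square-integrable martingale.

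Next, I would mimic Steps 2 and 3 of the previous proof. The pathwise bounds
\begin{equation*}
\inf_{t \leq T} N_{t \wedge \tau_\epsilon} \geq -V(X_0) - \eta T, \qquad \sup_{t \leq T} N_{t \wedge \tau_\epsilon} \geq \sup_{t \leq T} V(X^\epsilon_{t \wedge \tau_\epsilon}) - V(X_0) - \eta T
\end{equation*}
reduce the event $\{\tau_\epsilon \leq T\}$ to a two-sided boundary-crossing problem for $N$, which by Doob's optional sampling theorem combined with Markov's inequality on $V(X_0)$ yields $\mathbb{P}(\tau_\epsilon \leq T) \leq C/R + (R + \eta T)/f(\epsilon)$, where $f(\epsilon) := \inf\{V(x) : d(x,\partial U) = \epsilon\} \to +\infty$. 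The choice $R = \sqrt{f(\epsilon)}$ makes the right-hand side vanish, whence $\mathbb{P}(\tau_\epsilon \leq T) \to 0$. The limit construction $X_t := \lim_{\epsilon \to 0} X^\epsilon_t$ and the extension from $[0,T]$ to $[0,+\infty)$ then follow verbatim from Step 4 of Theorem \ref{thm:mainthm_existence_uniqueness}.

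The main obstacle is the careful interpretation and verification of the regularity condition iii): since $\mu$ is vector-valued, the expression $\mu \cdot \nabla \mu + \tfrac{1}{2}\operatorname{Tr}[\sigma^\top \nabla^2 \mu \, \sigma] \leq \eta$ is not literally a scalar inequality and must be recast in terms of a scalar Lyapunov function $V$ derived from $\mu$, for which one must verify that $\mathcal{L}^\epsilon V \leq \eta$ holds uniformly in $\epsilon$ (this uniformity is what actually makes the collision probability controllable as $\epsilon \downarrow 0$). A secondary but important point is that the moment hypothesis should be read as $\mathbb{E}[V(X_0)] < \infty$, matching the role formerly played by $\mathbb{E}[|\Phi(X_0)|]$, in order to invoke Markov's inequality legitimately.
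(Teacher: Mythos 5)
Your plan follows essentially the same route as the paper's (very condensed) proof: regularize via iv), apply It\^o to a Lyapunov quantity built from $\mu$, use iii) to bound the finite-variation part by $\eta T$, run the two-sided barrier-crossing/optional-sampling argument of Steps 2--3, and pass to the limit as in Step 4. The one real difference is the choice of Lyapunov function: the paper says it applies It\^o to $\mu(X_t)$ itself (a vector, so the decomposition and condition iii) are to be read componentwise), whereas you introduce the scalar $V=|\mu|$ (or $|\mu|^2$). Your choice is arguably the more faithful one, since the barrier argument in Step 3 needs a single scalar quantity that blows up as $d(x,\partial U)\to 0$ together with a single scalar martingale, and no individual component $\mu_i$ is guaranteed to blow up under i). However, you correctly flag but do not close the resulting gap: for $V=|\mu|$ one has $\partial_{jk}|\mu| = |\mu|^{-1}\big(\sum_i \partial_j\mu_i\,\partial_k\mu_i + \mu_i\,\partial_{jk}\mu_i\big) - |\mu|^{-3}\big(\sum_i\mu_i\partial_j\mu_i\big)\big(\sum_i\mu_i\partial_k\mu_i\big)$, so $\mathcal L V$ contains first-derivative terms in $\nabla\mu$ that are not controlled by the hypothesis iii) as literally written; verifying $\mathcal L V\le\eta$ therefore requires either extra structure on $\mu$ or a reformulation of iii). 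The paper's own proof is equally silent on this point, so your proposal is no weaker than the published argument, and you correctly identify the typo $\mathbb E[|\Phi(X_0)|]$, which should read as a moment condition on the Lyapunov function (e.g.\ $\mathbb E[|\mu(X_0)|]<\infty$). One small simplification: the uniformity in $\epsilon$ of the generator bound that you worry about is automatic, since iv.1) gives $\mu^\epsilon=\mu$ on $U_\epsilon$ and the bound is only ever used on $[0,t\wedge\tau_\epsilon]$, exactly as $H_3$ is used in the main theorem.
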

\begin{proof}
   The proof follows the same structure as the proof of Theorem \ref{thm:mainthm_existence_uniqueness}. Applying It\^o formula to $\mu(X_t)$, we obtain a decomposition with a martingale term and a drift term that, under the regularity assumption iii), satisfies a uniform bound analogous to that used in the proof of Theorem \ref{thm:mainthm_existence_uniqueness}. The rest of the argument - square integrability of the martingale term, martingale bounds, non-explosion in finite times, and passage to the limit - carries over with minimal modifications.  
   The existence of a global pathwise unique strong solution can be finally established.
\end{proof}

We stress that the assumptions of Proposition \ref{prop:general_drift} are not merely technical artifacts but they are verified in concrete and physically relevant models. The following noteworthy example shows how  they are satisfied by the class of interacting particle systems representing the vortex flow studied in \cite{1985_Takanobu}, where existence and uniqueness of strong solutions for a system of stochastic differential equations with singular drift were established.
\begin{example}[Vortex flux case \cite{1985_Takanobu}]\label{example:vortex_flux}
Let us consider $N$ interacting particles in $\mathbb R^d$, $d=2k, k>0$. Let $X_t=(X_1^t,\ldots,X_t^N) \in \mathbb R^{Nd}$ be the vector of their locations and  $W_t=(W_t^1,\ldots,W_t^N)$ an $R^{Nd}$ Brownian motion. 
 Let 
\begin{eqnarray}\label{def:U}
    U &:=& \mathbb{R}^{Nd} \backslash \left\{\bigcup_{1\leq j<k\leq N} \left\{x = (x^{(1)},...,x^{(N)})\in \mathbb{R}^{Nd} : x^{(j)}= x^{(k)} \right\}\right\}, 
\end{eqnarray}
and consider the following SDE 
 \begin{equation*}
        dX_t = \mu(X_t) dt + \sigma  dW_t
    \end{equation*}
with $\mu(X_t)= \left(\mu_1(X_t),\ldots, \mu_N(X_t)\right)$, where  
$
 \mu_i(X_t) = \sum_{j \neq i} \gamma_i \gamma_j \, \nabla H^\perp(X^i_t - X^j_t)
$
and $\sigma$ is a constant diffusion coefficient. Moreover,
\begin{itemize}
    \item[a.] $H(x) = g(|x|)$ with $g\in \mathcal{C}^2(0,\infty)$ exhibiting a singularity in $x=0$;
    \item[b.] the function $g$ is such that  $g''+ \frac{2d-1}{r}g' \leq C$.
\end{itemize}   Conditions i) and ii) are satisfied by assumption, and the regularity condition iii) derives from the orthogonality and the hypothesis upon the Laplacian expressed in hypothesis b.
The condition i) on the  regularization may be easily verified since the drift field is smooth away from the singular set and can be approximated via standard mollification techniques. In particular, one can construct a family of globally Lipschitz, at most linearly growing regularizations $\mu_\epsilon$ that agree with $\mu$ away from a neighbourhood of the singular set. 
\smallskip

In conclusion, the example proposed in \cite{1985_Takanobu} may be seen also as an example of drift not in gradient form for which Proposition \ref{prop:general_drift} can be applied.
\end{example}

\section{Brownian particles interacting via a Lennard Jones-type force: a unique strong solution}\label{sec:Lennard-Jones} 

Let  $\left(\Omega,\mathcal{F},\mathbb F=(\mathcal{F}_{t})_{t\in [0,T]},\mathbb{P}\right)$ be again a filtered probability space,
let $T>0$ be a fixed time horizon and $N\in \mathbb N$. Let $W\equiv \{(W_{t}^{1},..., W_{t}^{N})_{t\in[0,T]}\}$ be  a  family of $N$-dimensional $\mathbb R^d$-valued and $\mathcal{F}_t$-adapted Wiener processes.

\smallskip 

Let us consider a system of  $N$     particles subject to Brownian noise  $W$  and interacting via   a force derived by a Lennard-Jones potential, i.e. the pairwise particle interaction occurs via the force $F=- \nabla V$ , where  $V$ is the  Lennard-Jones potential characterized by two parameters $\alpha,\beta\in \mathbb R_+,$ and   defined, with $A,B \in \mathbb R_+$, for any $x\in \mathbb R^d$, as
\begin{equation}\label{def:Lennard_Jones_potential}
    V (x) := \frac{A}{|x|^\alpha} - \frac{B}{|x|^\beta}, \qquad \alpha>\beta\geq 0.
\end{equation} 
The Lennard-Jones potential models both attraction and repulsion  between two  particles: the first term represents a short-range strong repulsion to prevent particle overlap, while the second term represents a weaker, attractive force at intermediate range, able to model the Van der Waals forces.  The potential $V$ is bounded below, the minimum is reached in $|\bar{x}|=\left(\frac{A\alpha}{B\beta}\right)^\frac{1}{\alpha-\beta}$ and its value is
\begin{equation}\label{eq:minV}
\min V(x)= V(\bar{x})=\frac{B}{\alpha}\left(\frac{\beta B}{\alpha A}\right)^{\frac{b}{a-b}} 
    \left( \beta - \alpha\right) := -\delta^\prime,
\end{equation}
with $\delta^\prime >0$, since  $\alpha>\beta$.
The corresponding force $F= -\nabla V$ reads as 
 \begin{equation}\label{def:Lennard_Jones_force}
 F(x) = -\nabla V(x) = \left(\frac{\alpha A}{|x|^{\alpha+1}}- \frac{\beta B}{|x|^{\beta+1}}\right)\frac{x}{|x|}.
\end{equation}

\smallskip

Let $N\in \mathbb N$, and consider an $N$-dimensional stochastic process $X=\{X_t \equiv (X^1_t,...,X^N_t)\}_{t\in[0,T]}$, where 
 $X_t^i \in \mathbb{R}^d$ denotes the location of the $i$-th particle out of $N$ at time $t\in [0,T]$. The evolution of the process $X$  is described by the following first-order system of SDEs, for any $ t\in (0,T]$  
\begin{equation} \label{eq:SDE_LJ}
\begin{split}
         dX_t^i &= -\frac{1}{N}\mathop{\sum_{j=1}^N}_{j\neq i} \nabla V \left(X_t^i-X_t^j\right) dt + \sigma dW_t^i, \qquad i=1,\ldots,N,
\end{split}
\end{equation}
 subject to a random initial condition $X_0=(X_0^1,...,X_0^N)$  such that
\begin{equation}\label{eq:LJ_initial_condition}
\mathbb{E}\left[\left|X_0^i\right|^2 \right] < \infty, \qquad 
\mathbb{E} \left[ \sum_{i < j} \left|V(X_0^i - X_0^j) \right| \right] < \infty,
\end{equation}
\smallskip

We note that the requirement on the initial positions is that they are in $L^2(\Omega)$ and  that the initial mean interaction energy among each pair of particles is finite. A first goal is to show that the set of initial conditions \eqref{eq:LJ_initial_condition} is not void.
 Propositions \ref{prop:initial_condition_alpha<d} and \ref{prop:initial_condition_alpha>=d} below provide some sufficient conditions  such that \eqref{eq:LJ_initial_condition} hold true.

\begin{proposition}[Initial conditions \(\alpha < d\)]\label{prop:initial_condition_alpha<d}
   Let \( V \) be the potential \eqref{def:Lennard_Jones_potential} with  \( \alpha < d \). Let \( X_0 = (X_0^1, \ldots, X_0^N) \) be such that \( X_0^i \in L^2(\Omega) \) are independent and identically distributed (i.i.d.) random variables with common density \( \rho_0 \in L^p(\mathbb{R}^d) \), 
    for some \( p\geq  \frac{2d}{2d - \alpha} \).  Then, the random vector \( X_0 \) satisfies \eqref{eq:LJ_initial_condition}.
\end{proposition}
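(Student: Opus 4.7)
The plan is to observe that both conditions in \eqref{eq:LJ_initial_condition} reduce to estimates involving only the common density $\rho_0$, and to recognize the second condition as exactly the setting in which the Hardy--Littlewood--Sobolev (HLS) inequality applies, with the hypothesis $p \geq \frac{2d}{2d-\alpha}$ being precisely the HLS critical exponent for the most singular term $A/|x|^\alpha$.

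The first moment bound $\mathbb{E}[|X_0^i|^2] < \infty$ is immediate from the assumption $X_0^i \in L^2(\Omega)$. For the interaction energy, since $N$ is finite it suffices to handle one pair $i < j$. By independence, the joint law of $(X_0^i, X_0^j)$ has density $\rho_0(x)\rho_0(y)$ on $\mathbb R^d \times \mathbb R^d$, so that from the explicit form \eqref{def:Lennard_Jones_potential} of $V$,
\begin{equation*}
\mathbb{E}\bigl[|V(X_0^i - X_0^j)|\bigr] \,\leq\, A \int_{\mathbb R^d}\!\!\int_{\mathbb R^d} \frac{\rho_0(x)\rho_0(y)}{|x-y|^\alpha}\, dx\, dy \;+\; B \int_{\mathbb R^d}\!\!\int_{\mathbb R^d} \frac{\rho_0(x)\rho_0(y)}{|x-y|^\beta}\, dx\, dy.
\end{equation*}

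The core step is then to apply the HLS inequality to each double integral. Recall that for $0 < \gamma < d$ and $f \in L^{p_\gamma}(\mathbb R^d)$ with $p_\gamma := \tfrac{2d}{2d-\gamma}$, one has
\begin{equation*}
\left| \int\!\!\int \frac{f(x)f(y)}{|x-y|^\gamma}\, dx\, dy \right| \,\leq\, C_{d,\gamma}\, \|f\|_{L^{p_\gamma}}^{2}.
\end{equation*}
With $\gamma = \alpha$, the hypothesis $\alpha < d$ makes HLS applicable, and the required integrability $\rho_0 \in L^{p_\alpha}$ with $p_\alpha = \tfrac{2d}{2d-\alpha}$ is exactly the assumption of the proposition (monotonicity in $p$ of $L^p$-norms on probability measures takes care of $p \geq p_\alpha$). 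This bounds the first double integral.

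For the $\beta$ term, the case $\beta = 0$ is trivial since the integral equals $\int\!\!\int \rho_0(x)\rho_0(y)\,dx\,dy = 1$. For $\beta \in (0,\alpha)$, the relevant critical exponent $p_\beta = \tfrac{2d}{2d-\beta}$ satisfies $1 < p_\beta < p_\alpha$, so by interpolation between $\|\rho_0\|_{L^1} = 1$ and $\|\rho_0\|_{L^{p_\alpha}}$ one has $\rho_0 \in L^{p_\beta}$, and HLS again yields finiteness. Summing over the $\binom{N}{2}$ pairs concludes. I do not anticipate any real obstacle: the proof is essentially the remark that the hypothesis $p \geq \tfrac{2d}{2d-\alpha}$ is the HLS threshold, together with the reduction to the two-particle case via independence.
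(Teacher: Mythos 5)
Your proposal is correct and follows essentially the same route as the paper: reduce to a single pair by independence and exchangeability, split $|V|$ into the $\alpha$- and $\beta$-terms, and apply the Hardy--Littlewood--Sobolev inequality, with $p\geq \tfrac{2d}{2d-\alpha}$ covering the $\beta$-term by interpolation against $\|\rho_0\|_{L^1}=1$. Your treatment is in fact slightly more careful than the paper's in spelling out the degenerate case $\beta=0$ and the interpolation step that the paper only gestures at.
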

\begin{proof}
Let us start by the following estimate:
\begin{eqnarray*}
    \mathbb{E}\left[\left|V\left(X_0^i-X_0^j\right)\right|\right] &= &\int_{\mathbb{R}^{2d}} \left| \frac{A}{|x-y|^\alpha} -\frac{B}{|x-y|^\beta}   \right| \rho_0(x)\rho_0(y)dx dy\\
    &\le&    A\int_{\mathbb{R}^{2d}} \frac{\rho_0(x)\rho_0(y)}{|x-y|^\alpha} dx\, dy +B\int_{\mathbb{R}^{2d}} \frac{\rho_0(x)\rho_0(y)}{|x-y|^\beta} dx\, dy.
\end{eqnarray*} 

Each of the integrals above is finite by the Hardy–Littlewood–Sobolev inequality \cite{2002_Lieb}, provided that $\rho_0 \in L^{p_1}(\mathbb{R}^d)$, $p_1= {2d}/(2d - \alpha)$ and $\rho_0 \in L^{p_2}(\mathbb{R}^d)$, $p_2= {2d}/(2d - \beta)$, respectively. Since $\rho_0$ is a probability density, if $\rho_0\in L^p(\mathbb{R}^d)$, with
\[
p \geq \max\left\{ \frac{2d}{2d - \alpha}, \frac{2d}{2d - \beta} \right\} = \frac{2d}{2d - \alpha},
\]
both the previous conditions are satisfied. As a consequence,
\[
\mathbb{E}\left[\left|V(X_0^i - X_0^j)\right|\right] \le C_{\alpha,d} \| \rho_0 \|_{L^{p_1}(\mathbb{R}^d)}^2 + C_{\beta,d} \| \rho_0 \|_{L^{p_2}(\mathbb{R}^d)}^2 \leq C_{\alpha,\beta,d}\|\rho_0\|_{L^p(\mathbb{R}^d)}. 
\] 

This ensures that the initial mean interaction energy is finite, and thus condition \eqref{eq:LJ_initial_condition} holds.
\end{proof}

From Proposition \ref{prop:initial_condition_alpha<d} we may observe that, since \( \alpha < d \), the singularity of the Lennard-Jones potential is sufficiently mild to let  the initial random variables to be not only identically distributed but also independent,  provided suitable integrability assumptions are considered. 
The condition on \( p \) is sharp: indeed if \( p < \frac{2d}{2d - \alpha} \), the integral may diverge. 

We can avoid any constraint upon the order of the singularity whenever we consider dependent initial locations, as shown, as example, in the next proposition.

\begin{proposition}[Initial conditions for any \(\alpha>\beta \geq 0\)]\label{prop:initial_condition_alpha>=d}
 Let \( V \) and $U$   be the Lennard-Jones potential \eqref{def:Lennard_Jones_potential} and a measurable, non-negative confining potential such that $e^{-U(x)}\in L^2\left(\mathbb R^d\right)$  ( e.g. $U(x) = k|x|^2$), respectively.  Let $\rho_0^N$ be  the joint  density  of the initial particle locations \( X_0 = (X_0^1, \ldots, X_0^N) \) such that \( X_0^i \in L^2(\Omega) \) that we assume to be  the Gibbs measure with confinement, that is
 \begin{equation*}
    \rho^N_0(x^1, \dots, x^N) = C\exp\left(- c\sum_{i<j}V(x^i - x^j) - \sum_i U(x^i) \right),
    \end{equation*}
    with $c,C >0$  such that $C$ is a renormalization constant. Then, conditions \eqref{eq:LJ_initial_condition} hold true.
\end{proposition}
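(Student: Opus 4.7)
The plan is to exploit two structural features of the Lennard--Jones potential $V$: first, it is uniformly bounded from below by $-\delta'$ via \eqref{eq:minV}; second, the scalar map $v\mapsto v e^{-cv}$ is bounded above on $[-\delta',\infty)$, since it is continuous on this closed half-line and vanishes at $+\infty$. The Gibbs weight $e^{-cV}$ will therefore be used not just as a probability density but as an active damping factor against the singularities of $V$, which is the key reason why the dimensional constraint $\alpha<d$ of Proposition \ref{prop:initial_condition_alpha<d} can be dropped in this setting.

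For the second moment, I would first use $V\geq -\delta'$ pairwise to get the pointwise majoration $\rho_0^N(x^1,\dots,x^N)\leq K_N\prod_{k=1}^N e^{-U(x^k)}$ for a suitable constant $K_N>0$. Integrating $|x^i|^2$ against this majorant factorizes into one factor $\int_{\mathbb{R}^d} |x|^2 e^{-U(x)}\,dx$, finite by the confining nature of $U$ (and explicitly so for $U(x)=k|x|^2$), and $N-1$ copies of $\int_{\mathbb{R}^d} e^{-U(x)}\,dx$, which are finite as well; this yields $\mathbb E[|X_0^i|^2]<\infty$.

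For the mean interaction energy I would isolate the pair $(i,j)$ inside the exponent so that the singularity of $V(x^i-x^j)$ is damped by its own Boltzmann factor $e^{-cV(x^i-x^j)}$. Replacing $V$ by the uniformly bounded function $V e^{-cV}$ on that pair, and controlling the remaining interaction factor by a constant through $V\geq-\delta'$ as before, one obtains $|\mathbb E[V(X_0^i-X_0^j)]|\leq M_N\bigl(\int e^{-U}\bigr)^N<\infty$. The elementary inequality $|V|\leq V+2\delta'$ (which follows from $V\geq -\delta'$) then upgrades this to $\mathbb E[|V(X_0^i-X_0^j)|]<\infty$, and summing over the $N(N-1)/2$ pairs yields \eqref{eq:LJ_initial_condition}.

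The main obstacle lies precisely in this interaction step: a naive estimate $|V(x)|\leq A|x|^{-\alpha}+B|x|^{-\beta}$ combined with the confining weight $e^{-U}$ alone breaks down as soon as $\alpha\geq d$, because $|x|^{-\alpha}$ is then no longer locally integrable. The proof must genuinely use the Gibbs structure of $\rho_0^N$ — specifically the boundedness of $v\mapsto v e^{-cv}$ on $[-\delta',\infty)$ — and this is the point where one pays for the generality in $\alpha,\beta$ through the loss of independence of the initial positions.
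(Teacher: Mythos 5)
Your proof is correct and rests on the same mechanism as the paper's --- the Boltzmann factor $e^{-cV(x^i-x^j)}$ of the isolated pair absorbs the singularity of $V$, while $V\ge-\delta'$ on the remaining pairs and $U\ge0$ reduce everything to powers of $\int e^{-U}$ --- but your execution is more streamlined. The paper first passes to the two-particle marginal by exchangeability, bounds it by $C'e^{-cV(x-y)-U(x)-U(y)}$, and then splits the integral into $|x-y|\le M$ and $|x-y|>M$: the near part is handled by a change of variables, a Cauchy--Schwarz estimate involving $\|e^{-U}\|_{L^2}$, and the observation that $r^{d-1}|V(r)|e^{-cV(r)}$ is integrable at $0$ because $e^{-cV}$ decays superexponentially as $r\to0$; the far part uses the boundedness of $V$ away from the origin. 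Your single global bound $\sup_{v\ge-\delta'}|v|e^{-cv}<\infty$ (equivalently, the boundedness of $v\mapsto ve^{-cv}$ combined with $|V|\le V+2\delta'$) subsumes both regimes at once, making the near/far decomposition and the Cauchy--Schwarz step unnecessary; you also verify the second-moment condition, which the paper simply folds into the hypotheses. Two small caveats, both shared with the paper rather than introduced by you: the integrability $\int_{\mathbb R^d} e^{-U}\,dx<\infty$ that both arguments rely on does not in fact follow from $e^{-U}\in L^2$ and $U\ge0$ alone (it does hold for the Gaussian example and for any genuinely confining $U$); and for such a general $U$ the finiteness of $\int|x|^2e^{-U(x)}\,dx$ is likewise not automatic, which is presumably why the statement assumes $X_0^i\in L^2(\Omega)$ outright.
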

\begin{proof}
By exchangeability of the particles distribution, we have
\[
\mathbb{E}_{\rho_0^N} \left[ \sum_{i<j} |V(X_0^i - X_0^j)| \right] = \binom{N}{2} \mathbb{E}_{\rho_0^N} \left[ |V(X_0^1 - X_0^2)| \right],
\]
therefore it suffices to prove that
\[
 I:= \mathbb{E}_{\rho_0^N} \left[ |V(X_0^1 - X_0^2)| \right] < +\infty.
\]
Let \(\rho^2_0\) denote the two-particle marginal of \(\rho^N_0\). Then, since from \eqref{eq:minV}, $V\ge -\delta^\prime$ and $U\ge 0$ by hypothesis, there exists a constant \(C' > 0\) such that
\[
\rho_0^2(x,y) \leq C' e^{-c V(x - y) - U(x) - U(y)}.
\] 
Then,
\begin{align*}
I &= \int_{\mathbb{R}^d} \int_{\mathbb{R}^d} |V(x - y)| \rho_0^2(x,y) \, dx \, dy \\
  &\leq C' \int_{\mathbb{R}^d} \int_{\mathbb{R}^d} |V(x - y)| e^{-c V(x - y)} e^{-U(x)} e^{-U(y)} \, dx \, dy\\
  &\leq C^\prime \left( \int_{|x - y| \leq M} + \int_{|x - y| > M} \right) |V(x - y)| e^{-c V(x - y)} e^{-U(x)} e^{-U(y)} \, dx \, dy =: I_1 + I_2.
\end{align*}
By applying the change of variables $z = x - y, w = y$ to the first integral $I_1$ we obtain
\begin{eqnarray*}
    I_1 &=& C \int_{|z|\leq M} dz \, |V(z)| e^{-c V(|z|)} \int_{\mathbb{R}^d}  e^{-U\left(z+y\right)} e^{-U\left(y\right)} dy.
\end{eqnarray*}
By Cauchy–Schwarz inequality and translation invariance property we get the boundedness of the internal integral as
\[
\int_{\mathbb{R}^d} e^{-U\left(z+w\right)} e^{-U\left(w\right)} dw \leq \left( \int_{\mathbb{R}^d} e^{-2 U(w)} dw \right) = \| e^{-U}\|^2_{L^2(\mathbb R^d)} < \infty,
\]
which gives:
\[
I_1 \leq C \left|\mathbb S^{d-1}\right|\int_0^M r^{d-1}|V(r)| e^{-c V(r)} dr,
\]
where $\mathbb{S}^{d-1}$ denotes the surface of a unitary sphere in dimension $d$.
Since near the origin the integrand decays superexponentially regardless of the repulsion exponent $\alpha>0$, therefore the integral is finite.

For $I_2$, given \(M > 0\)  we have, from \eqref{eq:minV} and since $e^{-U}\in L^2(\mathbb R^d)$, and therefore $e^{-U}\in L^1(\mathbb R^d)$, we have that 
\begin{eqnarray*}
    I_2 &\leq& \int_{|x-y|> M}C \left(\frac{A}{|x-y|^\alpha}+\frac{B}{|x-y|^{\beta}} \right) e^{c\delta'} e^{-U(x)} e^{-U(y)} dx dy \leq \\
    &\leq &  C \left(\frac{A}{M^\alpha}+\frac{B}{M^{\beta}} \right) e^{c\delta'} \int_{|x-y|> M} e^{-U(x)} e^{-U(y)} dx dy \leq \\
    &\leq& C_{A,B,M,\delta'}\left(\int_{\mathbb{R}^d} e^{-U(x)} dx\right)^2  < +\infty.
\end{eqnarray*}
Then the statement is proved.

\end{proof}

\begin{remark}
The Gibbs measure in Proposition~\ref{prop:initial_condition_alpha>=d} is not the only initial distribution that ensures finiteness of the initial mean interaction energy \eqref{eq:LJ_initial_condition}. More generally, any Gibbs-type distribution with sufficient confinement and strong repulsion will suffice. For instance, for any \(\theta \geq \alpha\),
\[
\rho_0^N(x^1, \dots, x^N) \propto \exp\left(- \sum_{i<j} \frac{1}{|x^i - x^j|^\theta} - \sum_{i=1}^N U(x^i) \right)
\]
also yields a finite expected mean interaction energy. Other examples include hard-core interaction models and parking processes.
\end{remark}

In order to prove the well-posedness of the interacting particle system, we need to state two technical lemmas.

\begin{lemma}\label{lemma:ScalarProd_BD}
    Let $F=-\nabla V$ be the Lennard-Jones force \eqref{def:Lennard_Jones_force}. For any triplet $i,j,k$ of distinct particles, define
    \[
        F_{ij}:= F(X^i-X^j).
    \]
    Then, there exists a constant $C>0$ such that the following inequality holds:
    \begin{eqnarray*}
        F_{ij}\cdot\left(F_{ik}-F_{jk}\right) &\geq& -C\left(1 + |F_{ij}|+|F_{ik}|+|F_{jk}|\right).
    \end{eqnarray*}
\end{lemma}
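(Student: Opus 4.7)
The strategy is a three-way case analysis on the pairwise distances $u:=|x_{ij}|$, $v:=|x_{ik}|$, $w:=|x_{jk}|$, which by $x_{ij}=x_{ik}-x_{jk}$ form a triangle so that in particular $|v-w|\le u$. I would first fix a small threshold $\rho>0$ chosen so that on $\{|x|\le\rho\}$ the repulsive term $\alpha A/|x|^{\alpha+1}$ strictly dominates the attractive term $\beta B/|x|^{\beta+1}$ in the expression for $|F(x)|$; this is possible because $\alpha>\beta$. On the complement $\{|x|\ge\rho\}$ the force $F$ is smooth and the quantities $M_\rho:=\sup_{|x|\ge\rho}|F(x)|$ and $L_\rho:=\sup_{|x|\ge\rho}|DF(x)|$ are both finite. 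The elementary workhorse of the proof will be the absorption inequality $|F_{ij}|\,u \le C(1+|F_{ij}|)$, valid for $u\le\rho$: it follows from the repulsion-dominated scaling $|F_{ij}|\asymp u^{-(\alpha+1)}$ together with the Young-type bound $t^{\alpha/(\alpha+1)}\le 1+t$ for $t\ge 0$.

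In the first case $u\ge\rho$ the bound $|F_{ij}|\le M_\rho$ combined with Cauchy--Schwarz gives $|F_{ij}\cdot(F_{ik}-F_{jk})|\le M_\rho(|F_{ik}|+|F_{jk}|)$, which is absorbed with $C=M_\rho$. In the second case $u<\rho$ and $\min(v,w)\ge 2\rho$, the segment $s\mapsto x_{jk}+s\,x_{ij}$, $s\in[0,1]$, stays outside the singular region since $|x_{jk}+s\,x_{ij}|\ge|x_{jk}|-s\,u\ge 2\rho-\rho=\rho$; the mean value theorem yields $|F_{ik}-F_{jk}|\le L_\rho\,u$, and the absorption inequality then closes the argument.

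The core of the proof is the third case $u<\rho$ with $\min(v,w)<2\rho$, in which the triangle inequality forces $v,w<3\rho$ and all three forces can be simultaneously large. Here I would exploit the radial structure $F(x)=g(|x|)\,x$ with $g(r)=\alpha A/r^{\alpha+2}-\beta B/r^{\beta+2}$ and the algebraic splitting
\[
F_{ik}-F_{jk} \;=\; g(v)\,x_{ij} \;+\; (g(v)-g(w))\,x_{jk},
\]
which, after pairing with $F_{ij}=g(u)\,x_{ij}$, produces
\[
F_{ij}\cdot(F_{ik}-F_{jk}) \;=\; g(u)\,g(v)\,|x_{ij}|^2 \;+\; g(u)\,(g(v)-g(w))\,x_{ij}\cdot x_{jk}.
\]
The first summand is non-negative in the repulsion-dominated regime and can be dropped when seeking a lower bound. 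The second summand is controlled using the triangle-inequality estimate $|v-w|\le u$, a mean-value bound for $g$ along a path in $\{|x|\ge \frac{1}{2}\min(v,w)\}$ that avoids the singularity, and the trivial bound $|x_{ij}\cdot x_{jk}|\le u\,w$; the resulting quantity is then absorbed into $C(1+|F_{ij}|+|F_{ik}|+|F_{jk}|)$ through the scalings $|F|\asymp r^{-(\alpha+1)}$ and the absorption inequality above.

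The main obstacle is precisely this third case: what makes the lower bound linear, rather than quadratic, in the force magnitudes is the triangle-inequality constraint $|v-w|\le u$, which forces $g(v)-g(w)$ to vanish whenever $x_{ij}$ is small. Without this cancellation the negative contribution would only be bounded by $|F_{ij}|^2+|F_{ik}|^2+|F_{jk}|^2$ via a generic Cauchy--Schwarz estimate; the delicate part of the argument is to trade the extra power of $|F|$ for a factor of $u$ via $|v-w|\le u$ and then to pass from $u\,|F_{ij}|^2$ to $1+|F_{ij}|$ using the absorption inequality.
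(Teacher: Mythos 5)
Your first two cases ($u\ge\rho$, and $u<\rho$ with $\min(v,w)\ge 2\rho$) are sound, and you have correctly identified the third case, where all of $u=|x_{ij}|$, $v=|x_{ik}|$, $w=|x_{jk}|$ may be small, as the crux; but the absorption step you propose there does not work. Your workhorse inequality $|F_{ij}|\,u\le C(1+|F_{ij}|)$ trades \emph{one} power of the force against one factor of $u$, whereas your Case 3 chain produces a term of size $u\,|F_{ij}|^{2}$: with $u\asymp v\asymp w\asymp\varepsilon$ one gets $g(u)\,|g(v)-g(w)|\,|x_{ij}\cdot x_{jk}|\le g(u)\cdot C\min(v,w)^{-(\alpha+3)}\,u\cdot uw\asymp\varepsilon^{-2(\alpha+1)}$, i.e.\ of order $|F|^{2}$, while $1+|F_{ij}|+|F_{ik}|+|F_{jk}|\asymp\varepsilon^{-(\alpha+1)}$. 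Since $u\,|F_{ij}|^{2}\asymp u^{-(2\alpha+1)}\gg u^{-(\alpha+1)}$ for every $\alpha>0$, the announced passage ``from $u|F_{ij}|^{2}$ to $1+|F_{ij}|$'' is unjustified, and the quadratic loss is not an artifact of a lossy estimate. (A secondary point: to drop the first summand you also need $g(v)\ge 0$, i.e.\ $3\rho\le r_0$, which your choice of $\rho$ does not guarantee as stated.)

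Indeed, the inequality you are trying to prove fails outright in exactly this regime, so the gap cannot be closed by sharpening your estimates. Take $X^{i}=0$, $X^{j}=-u\,e$, $X^{k}=u\,e$ with $e$ a unit vector and $u$ small, so that $i$ sits midway between $j$ and $k$ and all three interactions are repulsive. Writing $f(r)=\alpha A r^{-(\alpha+1)}-\beta B r^{-(\beta+1)}$ for the force magnitude, one has $F_{ij}=f(u)e$, $F_{ik}=-f(u)e$, $F_{jk}=-f(2u)e$, hence $F_{ij}\cdot\left(F_{ik}-F_{jk}\right)=f(u)\bigl(f(2u)-f(u)\bigr)\sim-\bigl(1-2^{-(\alpha+1)}\bigr)f(u)^{2}$ as $u\downarrow 0$, which is quadratically negative in the force size, while $-C\bigl(1+|F_{ij}|+|F_{ik}|+|F_{jk}|\bigr)$ is only linear; no constant $C$ survives the limit. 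The same configuration also conflicts with the paper's own argument, whose Case 2 asserts non-negativity of the product whenever all three interactions are repulsive, so the obstruction you met in Case 3 is intrinsic to the statement as written: any repair must allow a term quadratic in the forces on the right-hand side, and then one must re-examine how the lemma is invoked in the induction of the subsequent lemma.
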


\begin{proof}
Let $r_0$ denote the equilibrium distance at which the force vanishes, i.e. $F(r_0) = 0$. For any couple of particles $i,j$, let us consider the vector $\mathbf{r}_{ij} := X^i-X^j$.

The proof proceeds by distinguishing cases based on whether the pairwise interactions are attractive (distance greater than $r_0$) or repulsive (distance less than $r_0$).

\medskip

\emph{Case 1: All interactions are attractive.} 
In this case, the vectors $F_{ik} - F_{jk}$ and $F_{ij}$ are aligned and they point in the same direction. Hence, the scalar product is non-negative:
\[
    F_{ij} \cdot \left( F_{ik} - F_{jk} \right) \geq 0.
\]

\medskip

\emph{Case 2: All interactions are repulsive.} Analogously to Case 1, the directions remain coherent and the scalar product is non-negative too.

\medskip

\emph{Case 3: One repulsive and two attractive interactions.} By the triangular inequality
\[
    \left| F_{ij} \cdot \left( F_{ik} - F_{jk} \right) \right|\leq 
    |F_{ij}| \left( |F_{ik}| + |F_{jk}| \right).
\]
Since the force magnitudes are bounded by a constant $H>0$ when attractive, if $(i,j)$ is the only repulsive interaction, then
\[
    \left| F_{ij} \cdot \left( F_{ik} - F_{jk} \right) \right|\leq 2H |F_{ij}|,
\]
while if $(i,k)$ is repulsive
\[
    \left| F_{ij} \cdot \left( F_{ik} - F_{jk} \right) \right| \leq
    H |F_{ik}| + H^2.
\]
When $(j,k)$ is repulsive, the case is symmetric.

\medskip

\emph{Case 4: One attractive and two repulsive interactions.} If $(i,j)$ is the only attractive interaction,
\[
    \left| F_{ij} \cdot \left( F_{ik} - F_{jk} \right) \right|
    \leq 2H \max\left\{ |F_{ik}|, |F_{jk}| \right\}.
\]
In the case where the only attractive interaction is $(j,k)$, since the repulsive forces may be large, we use geometric constraints via the triangle inequality on the particle positions to estimate distances and corresponding force magnitudes. In particular,
\[
    r_{j,k} \leq r_{i,j} + r_{i,k} \leq 2 \max \{ r_{i,j}, r_{i,k} \}.
\]
Given that the attractive interaction occurs at a distance of at least $r_0$, we have
\[
   \max \{ r_{i,j}, r_{i,k} \} \geq \frac{r_0}{2}.
\]
Using monotonicity properties of $F$ on $(0,r_0)$, we can deduce a uniform bound for the scalar product analogous to the previous cases:
\begin{align*}
    \left| F_{ij} \cdot \left( F_{ik} - F_{jk} \right) \right|
    &\leq |F_{ij}| \left( |F_{ik}| + |F_{jk}| \right) \\
    &\leq \max\{ |F_{ij}|, |F_{ik}| \} \left( \min\{ |F_{ij}|, |F_{ik}| \} + H \right) \\
    &\leq \max\{ |F_{ij}|, |F_{ik}| \}  \left( F\left( \max\{ r_{1,2}, r_{1,3} \} \right) + H \right) \\
    &\leq \max\{ |F_{ij}|, |F_{ik}| \}  \left( F\left( \frac{r_0}{2} \right) + H \right).
\end{align*}
Once again, when $(i,k)$ is attractive, the case is symmetric.

Combining all cases, we have
\begin{eqnarray}\label{eq:sharp_estimate}
        \left| F_{ij} \cdot \left( F_{ik} - F_{jk} \right) \right| &\leq&
        H^2 + \max\{2H,F\left(\frac{r_0}{2}\right)+H\} \max\{|F_{ij}|,|F_{ik}|,|F_{jk}|\} \nonumber \\
        &:=& M(i,j,k),
\end{eqnarray}
from which trivially follows the statement.
\end{proof}

\begin{lemma}\label{lemma:induction_Fsquared}
    For every $N \geq 2$, there exists a constant $C_N>0$ such that the following inequality holds
        \begin{equation}\label{eq:Fsquared_bound}
        \sum_{i=1}^{N} \left( \sum_{\substack{j=1\\ j \neq i}}^{N} F_{ij} \right)^2 \geq \sum_{1 \leq i < j \leq N} \left( F_{ij}^2 - C_N\left(1 + F_{ij}\right)\right).
    \end{equation}
\end{lemma}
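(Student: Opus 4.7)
The plan is to expand the squared norms on the left-hand side into a diagonal piece (which already produces the $|F_{ij}|^2$ terms on the right) and a cross-term piece over distinct triples, and then to control the latter by a symmetric application of Lemma \ref{lemma:ScalarProd_BD}. Even though the name of the lemma suggests an induction on $N$, a direct combinatorial bookkeeping seems cleaner and exploits the triple-estimate of Lemma \ref{lemma:ScalarProd_BD} more transparently.

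Setting $S_i := \sum_{j\neq i} F_{ij}$ and exploiting the antisymmetry $F_{ji}=-F_{ij}$ (hence $|F_{ji}|^2=|F_{ij}|^2$), the starting identity is
\[
\sum_{i=1}^N |S_i|^2 \;=\; 2\sum_{i<j}|F_{ij}|^2 \;+\; \sum_{(i,j,k)\ \text{distinct}} F_{ij}\cdot F_{ik}.
\]
The diagonal piece is already twice what the right-hand side of \eqref{eq:Fsquared_bound} demands, so only the cross sum needs a lower bound.

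The heart of the argument is at the level of a single unordered triple $\{a,b,c\}$: its six ordered permutations collapse, using $F_{ji}=-F_{ij}$, to the combination $2\bigl(F_{ab}\cdot F_{ac} - F_{ab}\cdot F_{bc} + F_{ac}\cdot F_{bc}\bigr)$. The key algebraic identification is that this is precisely the sum of three instances of $F_{ij}\cdot(F_{ik}-F_{jk})$ obtained by taking $(i,j,k)$ in $\{(a,b,c),(a,c,b),(b,c,a)\}$, so three applications of Lemma \ref{lemma:ScalarProd_BD} yield
\[
2\bigl(F_{ab}\cdot F_{ac} - F_{ab}\cdot F_{bc} + F_{ac}\cdot F_{bc}\bigr) \;\geq\; -3C\bigl(1 + |F_{ab}| + |F_{ac}| + |F_{bc}|\bigr),
\]
with $C$ the constant from that lemma.

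Finally, I would sum this per-triple bound over all $\binom{N}{3}$ unordered triples. Each pair $\{i,j\}$ lies in exactly $N-2$ triples, so the right-hand side reduces to $-3C\binom{N}{3} - 3C(N-2)\sum_{i<j}|F_{ij}|$. Choosing $C_N := 3C(N-2)$ then makes the linear-in-$|F_{ij}|$ term on the right match the linear term produced after rearranging, and the remaining constant shift $C_N\binom{N}{2} - 3C\binom{N}{3} = CN(N-1)(N-2)$ is manifestly non-negative, yielding \eqref{eq:Fsquared_bound} (with $F_{ij}$ on its right-hand side implicitly read as $|F_{ij}|$, the only interpretation consistent with a scalar inequality). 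The main obstacle is the algebraic identification in the third paragraph: locating exactly the triple of applications of Lemma \ref{lemma:ScalarProd_BD} whose sum reproduces the antisymmetric combination $F_{ab}\cdot F_{ac} - F_{ab}\cdot F_{bc} + F_{ac}\cdot F_{bc}$; once this is in place, the remaining pair-counting and the final binomial arithmetic are routine.
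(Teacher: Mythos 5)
Your proof is correct, and it reaches the paper's conclusion by a genuinely non-inductive route. The paper first establishes the sharper intermediate bound $\sum_i \bigl(\sum_{j\neq i}F_{ij}\bigr)^2 \geq \sum_{i<j}F_{ij}^2 - 2\sum_{i<j<k}M(i,j,k)$ by induction on $N$: at each step it discards the $i=N$ term of the outer sum, expands $\bigl(\sum_{j\neq i}^{N-1}F_{ij}+F_{iN}\bigr)^2$, and applies Lemma \ref{lemma:ScalarProd_BD} exactly once per pair $(i,j)$ with the new particle $N$ in the role of $k$. You instead expand everything at once, isolate the diagonal contribution $2\sum_{i<j}|F_{ij}|^2$ (retaining the factor of $2$ that the induction sacrifices at the base case), and observe that the six ordered cross terms attached to an unordered triple collapse to $2\bigl(F_{ab}\cdot F_{ac}-F_{ab}\cdot F_{bc}+F_{ac}\cdot F_{bc}\bigr)$, which you correctly identify as the sum of the three instances of $F_{ij}\cdot(F_{ik}-F_{jk})$ over the cyclic choices $(a,b,c)$, $(a,c,b)$, $(b,c,a)$ --- I have verified this identity and the subsequent pair-in-triple counting ($N-2$ triples per pair) and the binomial arithmetic $C_N\binom{N}{2}-3C\binom{N}{3}=CN(N-1)(N-2)\geq 0$ with $C_N=3C(N-2)$. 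Both arguments rest on the same key input (the per-triple estimate of Lemma \ref{lemma:ScalarProd_BD}) and yield constants of the same order in $N$; yours is arguably more transparent in exhibiting where each application of that lemma is used and avoids the induction bookkeeping, at the price of the one-line symmetrization identity, while the paper's induction makes the "add one particle at a time" structure explicit. Your reading of $F_{ij}^2$ and $F_{ij}$ on the right-hand side as $|F_{ij}|^2$ and $|F_{ij}|$ is the intended one.
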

\begin{proof}
We start by proving the following sharper intermediate estimate by induction on $N$:
    \begin{equation}\label{inequality_lemma32}
        \sum_{i=1}^{N} \left( \sum_{\substack{j=1\\ j \neq i}}^{N} F_{ij} \right)^2 \geq \sum_{1 \leq i < j \leq N} F_{ij}^2 - 2 \sum_{1 \leq i < j < k \leq N} M(i,j,k),
    \end{equation}
    where the quantity \( M(i,j,k) \) arises from \eqref{eq:sharp_estimate} in Lemma \ref{lemma:ScalarProd_BD}, and satisfies
    \[
        M(i,j,k)\leq C(1+|F_{ij}|+|F_{jk}|+|F_{ik}|)
    \]
    for some constant $C>0$ depending only on the interaction potential $V$.
Applying this bound and summing over all triplets gives
\[
    \sum_{1 \leq i < j < k \leq N} M(i,j,k) 
    \leq C \left( \binom{N}{3} + 3(N-2) \sum_{1 \leq l < m \leq N} |F_{lm}| \right),
\]
since any couple $(l,m)$ appears exactly $N-2$ times in the sum. This establishes the inequality in \eqref{eq:Fsquared_bound}.
\medskip
    
 \emph{Base Case: $N=2$}. Since there is only one interaction term, we compute:
\[
    \sum_{i=1}^{2} \left( \sum_{\substack{j=1 \\ j \neq i}}^{2} F_{ij} \right)^2 = F_{12}^2 + F_{21}^2 = 2F_{12}^2,
\]
and
\[
    \sum_{1 \leq i < j \leq 2} F_{ij}^2 = F_{12}^2, \quad 
    \sum_{1 \leq i < j < k \leq 2} M(i,j,k) = 0,
\]
so the inequality becomes \( 2F_{12}^2 \geq F_{12}^2 \), which clearly holds.

\smallskip

\emph{Case $N=3$}. We compute:
\begin{align*}
    \sum_{i=1}^{3} \left( \sum_{\substack{j=1\\ j \neq i}}^{3} F_{ij} \right)^2 &= (F_{12} + F_{13})^2 + (F_{21} + F_{23})^2 + (F_{31} + F_{32})^2 \geq\\
    &\geq F_{12}^2 + F_{13}^2 + F_{23}^2 + 2F_{12} \cdot (F_{13} - F_{23}).
\end{align*}
Applying Lemma~\ref{lemma:ScalarProd_BD} to the last term gives the statement for $N=3$:
\[
    \sum_{i=1}^{3} \left( \sum_{j \neq i} F_{ij} \right)^2 \geq F_{12}^2 + F_{13}^2 + F_{23}^2 - 2M(1,2,3).
\]

\emph{Inductive Step}. Let us assume the statement \eqref{inequality_lemma32} holds for $N-1$ and consider the $N$-particle case. We write:
\begin{align*}
    \sum_{i=1}^{N} \left( \sum_{\substack{j=1\\ j \neq i}}^{N} F_{ij} \right)^2 
    &\geq \sum_{i=1}^{N-1} \left( \sum_{\substack{j=1\\ j \neq i}}^{N} F_{ij} \right)^2 =
    \sum_{i=1}^{N-1} \left( \sum_{\substack{j=1\\ j \neq i}}^{N-1} F_{ij} + F_{iN} \right)^2  \\
    & = \sum_{i=1}^{N-1} \left( \sum_{\substack{j \neq i}}^{N-1} F_{ij} \right)^2 + \sum_{i=1}^{N-1} F_{iN}^2 
    + 2\sum_{i=1}^{N-1} \sum_{\substack{j=1\\ j \neq i}}^{N-1} F_{ij} \cdot F_{iN}.
\end{align*}
Using the anti-symmetric property of the force we rewrite the scalar products as
    \begin{eqnarray*}
        \sum_{i=1}^{N-1}\mathop{\sum_{j=1}}_{ j\neq i}^{N-1} F_{ij}\cdot F_{iN} &=& \sum_{1\leq i<j \leq N-1} \left[ F_{ij}\cdot F_{iN} + F_{ji}\cdot F_{jN}\right] =  \\
        &=& \sum_{1\leq i<j \leq N-1} \left[ F_{ij}\cdot F_{iN} -F_{ij}\cdot F_{jN}\right] =  \\
        &=& \sum_{1\leq i<j \leq N-1} F_{ij}\cdot \left(F_{iN}-F_{jN}\right) \geq 
        -\sum_{1\leq i<j \leq N-1} M(i,j,N),
    \end{eqnarray*}
    where Lemma \ref{lemma:ScalarProd_BD} has been used  in the last inequality.
    This estimate toghether with the inductive hypothesis give
    \begin{eqnarray*}
        \sum_{i=1}^{N}\left(\mathop{\sum_{j=1}}_{ j\neq i}^{N} F_{ij}\right)^2 &\geq& \sum_{1\leq i<j \leq N-1}F_{ij}^2 -2\sum_{1\leq i < j < k \leq N-1}M(i,j,k) +\\
        && + \sum_{i=1}^{N-1}F_{iN}^2 -2\sum_{1\leq i<j \leq N-1} M(i,j,N) = \\
        && = \sum_{1\leq i<j \leq N}F_{ij}^2 - 2\sum_{1\leq i < j < k \leq N}M(i,j,k)
    \end{eqnarray*}
which completes the induction.
\end{proof}

The next result gives the well-posedness of the system of Brownian particles, interacting by means of the pairwise Lennard-Jones force. This is the main result of the section.
\begin{theorem}[Well-posedness for the Lennard-Jones SDE]\label{thm:LJ_wellposedness}
   Let $d\geq1$,  $T>0$ and $N\geq 2$ fixed.  The system \eqref{eq:SDE_LJ}-\eqref{eq:LJ_initial_condition} admits a unique pathwise strong solution. Almost  surely, particles do not overlap within finite time.
\end{theorem}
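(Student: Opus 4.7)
The plan is to recognise the system \eqref{eq:SDE_LJ} as a single $\mathbb{R}^{Nd}$-valued autonomous SDE of the form \eqref{eq:SDE_gradientform} and to apply Theorem \ref{thm:mainthm_existence_uniqueness}. Introducing
$$\Phi(x^1,\dots,x^N) := \frac{1}{N}\sum_{1\leq i<j\leq N} V(x^i - x^j),$$
on the open domain $U := \mathbb{R}^{Nd}\setminus\bigcup_{i<j}\{x^i = x^j\}$, the anti-symmetry of $\nabla V$ gives $-\nabla_{x^i}\Phi = -\frac{1}{N}\sum_{j\neq i}\nabla V(X_t^i - X_t^j)$, so that \eqref{eq:SDE_LJ} reads $dX_t = -\nabla\Phi(X_t)\,dt + \sigma\,dW_t$ with $\mu\equiv 0$ and constant diffusion $\sigma I_{Nd}$. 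The task thus reduces to verifying Assumptions H and R, together with the integrability conditions \eqref{eq:mainthm_existence_uniqueness_initial_conditions} on $X_0$.

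I would dispatch the easier hypotheses first. Condition $H_1$ follows from \eqref{eq:minV}, which yields $\Phi \geq -\tfrac{N-1}{2}\delta'$. Condition $H_2$ is immediate: as $d(x,\partial U)\to 0$, at least one pair $|x^i-x^j|\to 0$, forcing the corresponding $V$-term (and hence $\Phi$) to diverge to $+\infty$ because all the remaining pair contributions are bounded from below. Hypothesis $H_4$ is trivial for constant $\sigma$. Finally, since $V\in\mathcal{C}^\infty(\mathbb{R}^d\setminus\{0\})$, the regularization $(\Phi^\epsilon,\mu^\epsilon)$ required by Assumption R can be produced by standard mollification of $V$ and $\nabla V$ outside the $\epsilon$-neighbourhood of the diagonal, followed by a cut-off ensuring linear growth and global Lipschitzianity. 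The integrability \eqref{eq:mainthm_existence_uniqueness_initial_conditions} is exactly \eqref{eq:LJ_initial_condition}, a class shown to be non-empty by Propositions \ref{prop:initial_condition_alpha<d} and \ref{prop:initial_condition_alpha>=d}.

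The main obstacle is the regularity condition $H_3$, which, for $\mu\equiv 0$ and scalar $\sigma$, takes the form
$$-|\nabla\Phi(x)|^2 + \frac{\sigma^2}{2}\Delta\Phi(x) \leq \eta,\qquad x\in U.$$
The delicate point is that $|\nabla\Phi|^2 = \frac{1}{N^2}\sum_i\bigl|\sum_{j\neq i}F_{ij}\bigr|^2$ could in principle see its diagonal contributions $|F_{ij}|^2$ annihilated by the cross terms. This is precisely what Lemmas \ref{lemma:ScalarProd_BD} and \ref{lemma:induction_Fsquared} are built to rule out: together they furnish the pointwise lower bound
$$|\nabla\Phi(x)|^2 \geq \frac{1}{N^2}\sum_{1\leq i<j\leq N}\!\bigl(|F_{ij}|^2 - C_N(1+|F_{ij}|)\bigr).$$
Combined with the explicit expression $\Delta\Phi = \frac{2}{N}\sum_{i<j}\Delta V(x^i-x^j)$, the inequality $H_3$ reduces to a pairwise radial estimate on $r=x^i-x^j$: it suffices to show that
$-c_1|F(r)|^2 + c_2\Delta V(r) + c_3|F(r)| + c_4$ is bounded above uniformly in $r\in\mathbb{R}^d\setminus\{0\}$. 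A direct computation from \eqref{def:Lennard_Jones_potential} gives $|F(r)|^2 \sim |r|^{-2(\alpha+1)}$ and $\Delta V(r) \sim |r|^{-(\alpha+2)}$ as $|r|\to 0$; since $-2(\alpha+1) < -(\alpha+2)$ for any $\alpha>0$, the quadratic gradient term dominates near the singularity and the expression is even driven to $-\infty$, while away from the singularity every summand is bounded. Thus $H_3$ holds with some $\eta = \eta(N,\sigma,A,B,\alpha,\beta,d)$.

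Once every assumption of Theorem \ref{thm:mainthm_existence_uniqueness} is in place, the conclusion is immediate: there exists a pathwise unique global strong solution to \eqref{eq:SDE_LJ}, and the explosion time $\tau = \inf\{t>0:X_t\in\partial U\}$ is almost surely infinite, which is exactly the statement that no two particles collide in finite time.
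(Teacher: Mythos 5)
Your proposal is correct and follows essentially the same route as the paper: recast \eqref{eq:SDE_LJ} as a gradient-type SDE on $\mathbb{R}^{Nd}$ with $\Phi(x)=\frac{1}{N}\sum_{i<j}V(x^i-x^j)$, verify $H_1$, $H_2$, $H_4$ directly, obtain $H_3$ from Lemmas \ref{lemma:ScalarProd_BD} and \ref{lemma:induction_Fsquared} together with the asymptotic comparison $|F|^2\sim|r|^{-2(\alpha+1)}$ versus $\Delta V\sim|r|^{-(\alpha+2)}$, and conclude via Theorem \ref{thm:mainthm_existence_uniqueness}. The only (inessential) difference is that you produce the regularization of Assumption R by mollification and cut-off, whereas the paper uses an explicit second-order Taylor extension of $V$ below $r=\epsilon$.
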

\begin{proof}
The strategy we adopt is to   apply Theorem \ref{thm:mainthm_existence_uniqueness}. With this aim, following \cite{2005_krylov_rockner} we rewrite the system as a multidimensional SDE on $\mathbb{R}^{Nd}$   and we prove that the drift satisfies  Assumptions H.

Let us introduce the open set  $U\subset \mathbb R^d$ of admissible configurations excluding  overlapping particles as in \eqref{def:U} in Example \ref{example:vortex_flux}.
Define the global interaction potential $\Phi:\mathbb{R}^{Nd}\to\mathbb{R}$ as
\begin{eqnarray*}
    \Phi(x) &:=& \frac{1}{N} \sum_{1\leq i < j\leq N} V\left(x^{i}-x^{j}\right) \qquad x= (x^{1},\dots, x^{N}) \in \mathbb{R}^{Nd}.
\end{eqnarray*}
let us consider the   notations $V\left(x^{i}-x^{j}\right)=:V_{ij}$ and $-\nabla V_{ij} =: F_{ij}$.
\smallskip

Let us first prove that Assumptions $H$ are satisfied.

\textit{Assumption $H_1$.} The boundedness from below of the potential $\Phi$ follows from \eqref{eq:minV}. 
 
\smallskip
\textit{Assumption $H_2$.} We observe that $\Phi(x) \to \infty$ as any inter-particle distance $|x^{(i)}-x^{(j)}|\to 0$, due to the singularity of $V$ at the origin. This implies that the drift blows up near the boundary of $U$.

\smallskip
\textit{Assumption $H_3$.} We verify that the potential verifies the regularity condition. Indeed, direct computations give
\[
    |\nabla\Phi|^2 = \frac{1}{N^2} \sum_i \left(\sum_{j\neq i} F_{ij} \right)^2, \qquad \Delta\Phi = \frac{1}{N}\sum_{i<j}\Delta V_{ij}.
\]
By Lemma \ref{lemma:induction_Fsquared}, we can estimate
\begin{align}\label{estimate:LJ}
    |\nabla\Phi(x)|^2 -    \frac{\sigma^2}{2}\Delta\Phi(x) 
    &\geq \sum_{i <j} \left(\frac{1}{N^2} F_{ij}^2 - C_N\left(1+|F_{ij}|\right) - \frac{\sigma^2}{2N} \Delta V_{ij}\right):= \sum_{i <j} \eta_{ij}
\end{align}
where \( F_{ij} = -\nabla V(x^i - x^j) \), \( \Delta V_{ij} = \Delta V(x^i - x^j) \), and \( C_N > 0 \) is a constant depending on \( N \).

We now analyse the behaviour of each term in \eqref{estimate:LJ}, particularly near the singularity at the origin, where \( |x^i - x^j| \to 0 \). 
From the well-known asymptotics of the Lennard-Jones force \eqref{def:Lennard_Jones_force} and potential \eqref{def:Lennard_Jones_potential}, we have
\[
    |F(|x|)| \sim |x|^{-(\alpha + 1)}, \quad \text{and} \quad |F(|x|)|^2 \sim |x|^{-2(\alpha + 1)} \quad \text{as } |x| \to 0.
\]
In contrast, the Laplacian term has the form
\[
    \Delta V(x) = \frac{\alpha A (\alpha - d + 2)}{|x|^{\alpha + 2}} - \frac{\beta B (\beta - d + 2)}{|x|^{\beta + 2}},
\]
so that its leading-order singularity is sensitive to the space dimension \( d \). Specifically, the sign and strength of the singularity depend on whether \( \alpha \) is greater than, equal to, or less than \( d - 2 \). However, in all regimes, such singularity is milder than that of \( |F(x)|^2 \).

In particular, there exists a radius \( r^* > 0 \) such that for all \( |x^i - x^j| \in (0, r^*] \), the quantity \( \eta_{ij} \) in \eqref{estimate:LJ} is strictly positive. For all the other couples of particles the potential \( V \), the force \( F \), and the Laplacian \( \Delta V \) are continuous and bounded, and consequently the corresponding terms \( \eta_{ij} \) remain uniformly bounded from below.

We conclude that each term \( \eta_{ij} \) in the sum \eqref{estimate:LJ} is either strictly positive (when particles are sufficiently close) or uniformly bounded from below, which ensures Assumption \(H_3\) is satisfied.
\medskip

\textit{Assumption $R$.}
A possible regularization can be obtained via a Taylor Expansion-based regularization. Specifically, we define $V_\epsilon$ as a $\mathcal{C}^2$ function obtained by extending the Lennard-Jones potential on $[0,\epsilon]$ using its second-order Taylor expansion around $r = \epsilon$:
\begin{equation*}
    V_\epsilon(r) := 
    \begin{cases}
        \frac{A}{\epsilon^\alpha}-\frac{B}{\epsilon^\beta} - \left[\frac{\alpha A}{\epsilon^{\alpha+1}}- \frac{\beta B}{\epsilon^{\beta+1}}\right](r - \epsilon) + \left[\frac{\alpha(\alpha+1) A}{\epsilon^{\alpha+2}}- \frac{\beta(\beta+1) B}{\epsilon^{\beta+2}}\right]\frac{(r - \epsilon)^2}{2},
        & r\in [0,\epsilon]; \\[10pt]
        \frac{A}{r^\alpha}-\frac{B}{r^\beta}, & r\geq \epsilon.
    \end{cases}
\end{equation*}
The corresponding force is:
\begin{equation}\label{FLJ:TaylorReg}
    -\nabla V_\epsilon(r) := 
    \begin{cases}
        \left[\frac{\alpha A}{\epsilon^{\alpha+1}}- \frac{\beta B}{\epsilon^{\beta+1}}\right]\Vec{r} - \left[\frac{\alpha(\alpha+1) A}{\epsilon^{\alpha+2}}- \frac{\beta(\beta+1) B}{\epsilon^{\beta+2}}\right](r - \epsilon)\Vec{r},
        & r\in [0,\epsilon]; \\[10pt]
        \left[\frac{\alpha A}{r^{\alpha+1}}-\frac{\beta B}{r^{\beta+1}}\right]\Vec{r}, & r\geq \epsilon.
    \end{cases}
\end{equation}
The regularized force \eqref{FLJ:TaylorReg} has linear growth near the origin, matches the original one \eqref{def:Lennard_Jones_force} outside the regularization region and preserves the monotonicity of the force at close ranges, since they are both decreasing. Therefore, by Theorem \ref{thm:mainthm_existence_uniqueness} the statement is proved.
\end{proof}

\begin{remark}
    The interesting part of the proposed approach is that truly not only the Lennard-Jones potential satisfies the regularity assumptions of Theorem \ref{thm:mainthm_existence_uniqueness}, but we automatically have well-posedness even if we add to the Lennard-Jones drift a singular drift $\mu(x)$ not in gradient form, provided that Assuption $H_3$ is satisfied. Since we proved in Theorem \ref{thm:LJ_wellposedness} that the singularity of the Laplacian of Lennard-Jones is milder than the singularity of the squared force term, this is equivalent to requiring that the singularity of the scalar product $\mu(x)\cdot \nabla V(x)$ is milder than the singularity of $|\nabla V(x)|^2$. In particular, this is trivially satisfied for the vortex flow case from Example \ref{example:vortex_flux}, since the rotational drift and the gradient of the Lennard-Jones potential would be orthogonal, but it holds true by an even wider class of drifts not in gradient form.
\end{remark}

\section*{Acknowledgments}
The research is carried out within the research project PON 2021(DM 1061, DM 1062) ``Deterministic and stochastic mathematical modelling and data analysis within the study for the indoor and outdoor impact of the climate and environmental changes for the degradation of the Cultural Heritage" of the Università degli Studi di Milano. The authors are members of GNAMPA (Gruppo Nazionale per l’Analisi Matematica, la Probabilità e le loro Applicazioni) of the Italian Istituto Nazionale di Alta Matematica (INdAM).
\printbibliography

\end{document}